\documentclass[10pt,oneside,reqno]{amsart}
\usepackage{graphicx,tikz-cd} % Required for inserting images
\usetikzlibrary{decorations.pathreplacing}
\usepackage{amssymb,amsmath,amsfonts,amsthm,physics}
\usepackage{subfiles}
\usepackage{hyperref,multicol,multirow,array}
\usepackage[capitalise]{cleveref}
\hypersetup{
    colorlinks=true,
    linkcolor=blue,
    filecolor=magenta,      
    urlcolor=cyan,
    citecolor=blue,
    pdftitle={Spread of a Hamiltonian Path},
    pdfpagemode=FullScreen,
    }
\usepackage[dvipsnames]{xcolor}

\newcolumntype{C}{>{\centering\arraybackslash}m{3cm}}

\newcommand{\evenklein}{%
    \tikz[baseline=-0.5ex, scale=0.35]{
        \draw[fill=lightgray, fill opacity=0.1, line width=1pt]
            (-0.5,0) -- (-0.5,4) -- (3.5,4) -- (3.5,0) -- cycle;
        \draw[line width=1pt, color=PineGreen]
            (2.5,0) -- (0.5,4);
    }%
}
\newcommand{\eventorus}{%
    \tikz[baseline=-0.5ex, scale=0.35]{
        \draw[fill=PineGreen, fill opacity=0.3, line width=1pt]
            (-0.5,0) -- (-0.5,4) -- (3.5,4) -- (3.5,0) -- cycle;
    }%
}

\newcommand{\oddtorus}{%
    \tikz[baseline=-0.5ex, scale=0.35]{
        \draw[fill=lightgray, fill opacity=0.1, line width=1pt]
            (6,0) -- (6,4) -- (10,4) -- (14,0) -- cycle;
        \draw[line width=1pt, color=PineGreen]
            (8,0) -- (8,4);
    }%
}

\newcommand{\oddklein}{%
    \tikz[baseline=-0.5ex, scale=0.35]{
        \draw[fill=PineGreen, fill opacity=0.3, line width=1pt]
            (6,0) -- (6,4) -- (10,4) -- (14,0) -- cycle;
    }%
}
    
\newtheorem{theorem}{Theorem}[section]
\newtheorem{theoremletter}{Theorem}

\newtheorem{proposition}[theorem]{Proposition}
\newtheorem{corollary}[theorem]{Corollary}
\newtheorem{lemma}[theorem]{Lemma}
\theoremstyle{definition}
\newtheorem{definition}[theorem]{Definition}
\newtheorem*{recall}{Recall}
\newtheorem*{remark}{Remark}

\newtheorem{example}[theorem]{Example}

\newtheorem*{question}{Question}

\renewcommand{\dd}{\textup{d}}

\newcommand{\ft}{\mathfrak{t}}

\newcommand{\fg}{\mathfrak{g}}

\newcommand{\IR}{\mathbb{R}}
\newcommand{\IC}{\mathbb{C}}
\newcommand{\IZ}{\mathbb{Z}}
\newcommand{\FF}{\mathbb{F}}

\newcommand{\dR}{\textup{dR}}

\newcommand{\R}{\mathbb{R}}
\newcommand{\C}{\mathbb{C}}

\newcommand{\N}{\mathbb{N}}
\newcommand{\F}{\mathbb{F}}

\newcommand{\vf}[1]{\mathfrak{X}\left( #1 \right)}
\newcommand{\fgz}{\mathfrak{g}_\mathbb{Z}}

\newcommand{\id}{\textup{id}}

\newcommand{\resc}{{\textup{res},c}}

\newcommand{\cut}{\textup{cut}}

\newcommand{\hor}{\textup{hor}}

\newcommand{\Ham}{\textup{Ham}}
\newcommand{\iso}{\textup{iso}}

\title{The Spread Construction and Non-Uniqueness of Visible Lagrangians}

\author{Jo\'e Brendel}  

\address{
D-MATH,
ETH Zürich, 
Rämistrasse 101,
8092 Zürich,
Switzerland }
\email{joe.brendel@math.ethz.ch}

\author{Reto Kaufmann}  

\address{
D-MATH,
ETH Zürich, 
Rämistrasse 101,
8092 Zürich,
Switzerland }
\email{reto.kaufmann@math.ethz.ch}

\begin{document}

\maketitle

\begin{abstract}
    Every even symplectic Hirzebruch surface contains a Lagrangian torus and every odd symplectic Hirzebruch surface contains a Lagrangian Klein bottle as their respective \emph{real locus}. From a toric point of view, this is a visible Lagrangian which surjects to the full moment polytope under the moment map. In this paper, we prove that every Hirzebruch surface contains both a Lagrangian torus and a Lagrangian Klein bottle with this property. An interesting consequence is that the topology of \emph{visible} Lagrangian submanifolds is not determined by their image under the moment map. The proof is based on a new construction, which we call the \emph{spread} of a family of Hamiltonian diffeomorphisms.
\end{abstract}

\tableofcontents
\section{Introduction}
\subsection{Main Result}
Any toric symplectic manifold contains a distinguished Lagrangian submanifold, called \emph{real locus}. A classical example is the real locus of $\C P^n$, which is $\R P^n$. In the general toric case the real locus is the fixed point set of the anti-symplectic involution induced by complex conjugation on $\C^N$ via Delzant's construction. The real locus behaves well with respect to the toric moment map (see \cite{duistermaat1983convexity}). In particular, 
\begin{enumerate}
    \item the image of the real locus under the moment map is the full moment polytope, and
    \item over the interior of the moment polytope, the restriction of the moment map to the real locus is a covering map. 
\end{enumerate} 

The goal of this paper is to show that the real locus is not the only Lagrangian with these properties. In view of this, we refer to any Lagrangian (not necessarily the real locus) satisfying the properties (1) and (2) above as a Lagrangian \emph{covering the moment polytope}. More precisely, what we show is that this property of covering the moment polytope does not even determine the topology of the Lagrangian.

We restrict our attention to symplectic Hirzebruch surfaces $(\F_n,\omega)$, where $n \in \N$. The moment polytopes is a trapezoid in the plane. In the case of $\F_0 = S^2 \times S^2$, the situation is particularly simple: the real locus is just the two-torus obtained as the product of two meridians. In fact, for every even $n$, the real locus is a Lagrangian torus. For odd $n$ it is a Lagrangian Klein bottle. 

\begin{theoremletter}
    \label{thm:mainA}
    Let $(\F_n,\omega)$ be a toric symplectic Hirzebruch surface. There is both a Lagrangian torus and a Lagrangian Klein bottle that cover the moment polytope of $\F_n$. 
\end{theoremletter}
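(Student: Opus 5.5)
The real locus of $\F_n$ already gives one of the two required Lagrangians. It is a torus for $n$ even and a Klein bottle for $n$ odd, and it covers the moment polytope by the classical properties (1) and (2). So the task is to produce, for each $n$, a Lagrangian of the \emph{other} topology that still covers the trapezoid. I would do this with the spread construction announced in the abstract.

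The idea is to start from a visible Lagrangian lying over a one-dimensional piece of the polytope and sweep it across the polytope by a one-parameter family of Hamiltonian diffeomorphisms. These pictures are indicated by the paper's icons $\evenklein$ and $\oddtorus$.

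I would use the toric circle action rotating the fibres of $\F_n \to S^2$. Its moment map $\mu_2$ is the vertical coordinate of the trapezoid. Concretely, I take a visible Lagrangian circle $\gamma$ in a reduced space, namely a segment in the polytope lifted over a real circle in each orbit. I then take a loop $\phi_t$, $t\in S^1$, of Hamiltonian diffeomorphisms that commute with the torus action up to translation in the polytope. The spread is
\[ L=\bigcup_{t}\phi_t(\gamma_t), \]
where $\gamma_t$ is the segment translated horizontally. Equivalently, it is the image of an immersion $S^1\times[0,1]\to \F_n$ that closes up along the collapsed edges.

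The first key step is to check that $L$ is Lagrangian. This follows if the family $\phi_t$ is generated by a Hamiltonian $H_t$ whose restriction to $\phi_t(\gamma)$ is constant, so that the flux term $\omega(\partial_t,\cdot)$ vanishes on $L$. I would arrange this by choosing $H_t$ to be a function of the moment map components, so that the isotropy of the sweep is automatic.

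The second step is visibility and covering. Because each slice lies over a segment and the segments sweep the whole trapezoid, property (1) holds. Over the interior the map is a local diffeomorphism, hence a covering, which gives property (2).

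The third step, which I expect to be the main obstacle, is identifying the topology. The segments degenerate at the edges of the trapezoid, where a circle collapses. The gluing of the two boundary circles of $S^1\times[0,1]$ (or the monodromy around the $t$-loop) is governed by the twisting $n$ of the Hirzebruch surface. That twisting is the slanted edge of slope depending on $n$.

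I would compute this monodromy as the holonomy of the real structure, as a sign $(-1)^{n}$, modified by an extra half-twist introduced by the spread. Choosing the spread to rotate by half a turn in the fibre direction flips this sign, and so produces a Klein bottle for even $n$ and a torus for odd $n$.

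The delicate part is smoothness of $L$ near the collapsed edges and vertices. There I would use local toric charts $\C^2$ and verify that the construction looks like a product of real lines rotated by a loop in $U(1)$. That model is a standard smooth Lagrangian.
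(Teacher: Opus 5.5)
Your proposal has a genuine gap at its core. The mechanism that is supposed to move the Lagrangian across the trapezoid is not available in the form you describe, and the two hardest steps are asserted rather than proved.

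\textbf{The sweep cannot move anything horizontally.} If $H_t$ is a function of the moment map components, then $\{H_t,\mu_i\}=0$. Its flow $\phi_t$ therefore preserves every fibre of $\mu$, only translating within the torus fibres, so $\mu(\phi_t(X))=\mu(X)$ for every subset $X$. Nor is there a Hamiltonian diffeomorphism of $\F_n$ that commutes with the torus action ``up to translation in the polytope''. Equivariance forces $\mu\circ\phi=\mu+v$ for a constant $v$, and $v=0$ because $\phi$ preserves the compact image $\mu(\F_n)$.

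\textbf{The Lagrangian condition is then unproved.} All horizontal motion would have to come from the translated pieces $\gamma_t$ themselves. Your flux argument only controls $\bigcup_t\phi_t(\gamma)$ for a fixed isotropic $\gamma$. It says nothing about the extra term $\omega(\partial_t\gamma_t,\partial_\sigma\gamma_t)$, where $\sigma$ parametrizes $\gamma_t$.

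\textbf{The object $\gamma$ is not well defined.} A circle in the reduced sphere is not a subset of $\F_n$. A segment lifted with a circle in each torus fibre is already a $2$-dimensional Lagrangian, and sweeping it gives a $3$-dimensional set.

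\textbf{Topology and smoothness are asserted.} The monodromy sign $(-1)^n$ ``flipped by a half-twist'' and the smoothness along the collapsed edges carry all the content of your approach, and neither is derived.

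\textbf{The missing idea.} The spread is not a sweep but a single global Hamiltonian diffeomorphism, which makes topology and smoothness automatic. The paper starts from a visible Lagrangian $L$ that already has the desired topology:
\begin{itemize}
\item for even $n$, the Lagrangian over a slanted segment coming from $H_{\text{even}}=\{(e^{2i\theta},e^{i\theta})\}$, which becomes a Klein bottle when the two cuts add crosscaps;
\item for odd $n$, the Lagrangian over a vertical segment coming from $H_{\text{odd}}=S\times\IZ_2$, which becomes a torus.
\end{itemize}
At each level, $(L\cap r^{-1}(c))/\IZ_2$ is a circle $\tilde L_c$ of constant height in the reduced sphere $B=S^2$.

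One then picks a smooth family $\varphi^c=\psi^{f^c}_1\in\Ham(S^2)$ taking $\tilde L_c$ to a circle covering $\mu_B(B)$. From it one forms the spread Hamiltonian $F(p,c)=(1+\lambda c)f^c(\pi(p))$ on the Duistermaat--Heckman model, which descends through the cuts. It is invariant under the $r$-circle action but is not a function of $\mu$, which is exactly what lets it move $L$ in the $s$-direction.

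Then $r\circ\psi^F_1=r$, and $\psi^F_1$ induces $\varphi^c$ on every reduced space, cut loci included. A diagram chase gives $s(\psi^F_1(L\cap r^{-1}(c)))=s(r^{-1}(c))$ together with the submersion property. Since $\psi^F_1$ is a diffeomorphism, $\psi^F_1(L)\cong L$ is an embedded Lagrangian with no further work.
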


Let us now discuss the main motivation for this result, the study of \emph{visible Lagrangians} \cite[Chapter 5]{evans2023lectures} in toric symplectic manifolds.

\subsection{Visible Lagrangians}
The notion of visible Lagrangians goes back to work of Symington \cite{symington2003fourtwo} who used it for surfaces in (almost) toric symplectic four-manifolds. It can be used to conveniently visualize Lagrangian submanifolds in the Delzant polytopes/almost toric base diagrams of the corresponding ambient space. Interestingly, this visualization method is often sharp in the sense that certain Lagrangians exist if and only if they exist as visible Lagrangians; see Evans--Smith \cite{EvansSmith2018} and Adaloglou--Evans \cite{adaloglou2024klein} for results in that direction for Lagrangian pinwheels and Lagrangian Klein bottles, respectively. For the present paper, we use \cite[Definition 5.2]{evans2023lectures} for Lagrangians in toric manifolds of arbitrary dimension:
\begin{definition}
    Let $L$ be a Lagrangian in a toric symplectic manifold. $L$ is called \emph{visible} if it is compatible with the moment map $\mu$ in the sense that away from the boundary of the polytope, 
\begin{enumerate}
    \item $\mu(L)$ is a submanifold, and
    \item the restriction $\mu\vert_L \colon L \rightarrow \mu(L)$ is a submersion.
\end{enumerate} 
\end{definition}
An extremal case of this scenario is well-known. A fibre of the moment map over the interior of the moment polytope is a Lagrangian torus called a \emph{toric fibre}. This corresponds to the case where the submanifold in the polytope is zero-dimensional and its fibre part is the full fibre. The other extremal case, where the submanifold in the polytope has full dimension, corresponds to what we called a \emph{Lagrangian covering the moment polytope} above. In view of these examples, visible Lagrangians can be viewed as a generalization of both toric fibres and the real locus of a toric manifold.

Visible Lagrangians have a surprising amount of rigidity (see \cite[Theorem 5.1]{evans2023lectures} and the remark thereafter for details). In particular,  
\begin{enumerate}
    \item the image $\mu(L)$ intersects the interior of the moment polytope in an affine linear subspace which is rational with respect to the weight lattice;
    \item the Lagrangian intersects each fibre of the moment map in a disjoint union of tori consisting of translates of the orthogonal complement of the above affine linear subspace.
\end{enumerate}
In view of this and Delzant's classification theorem of toric symplectic manifolds in terms of their moment polytope, it is then very natural to ask the following:
\begin{question} \label{qu:determinedbyimage}
    To what extent is a connected visible Lagrangian determined by its image under the moment map?
\end{question}
For some classes of visible Lagrangians, it is known that the image determines the Lagrangian to a large degree. In case the visible Lagrangian is fixed under a certain type of antisymplectic involution, then \cite[Theorem B]{brendel2023topology} states that its diffeomorphism type is determined by its image under the moment map. Additionally, there is ongoing work by Cannas da Silva--Karshon~\cite{cannas2025toric} which investigates a set of Lagrangians called \emph{toric} which turn out to be the lift of the real locus of a reduction level whose reduced space is itself toric. In that setting, the corresponding Lagrangians are completely classified by their image under the moment map up to \emph{toric isotopies} --- these are a special kind of Hamiltonian isotopies which preserve the fibres of the moment map. Both of these instances are examples of visible Lagrangians. 

In contrast, \cref{thm:mainA} shows that such a conclusion cannot hold for general visible Lagrangians:
\begin{corollary}
    The image under the moment map of a visible Lagrangian $L$ does not determine the topology of $L$.
\end{corollary}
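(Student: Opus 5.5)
This follows directly from \cref{thm:mainA}, once we check that a Lagrangian covering the moment polytope is visible in the sense of the definition above. Fix any toric symplectic Hirzebruch surface $(\F_n,\omega)$, for instance $\F_0 = S^2\times S^2$ with a product form, and let $\Delta$ be its moment polytope (a trapezoid). By \cref{thm:mainA} there is a Lagrangian torus $T\subset \F_n$ and a Lagrangian Klein bottle $K\subset\F_n$, each covering $\Delta$. By property (1) of covering the moment polytope, both images agree: $\mu(T)=\mu(K)=\Delta$.

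Next I would verify visibility. Over the interior $\mathring\Delta$, the set $\mu(L)\cap\mathring\Delta=\mathring\Delta$ is an open subset of $\R^2$, hence a submanifold. By property (2), $\mu|_L$ is a covering map over $\mathring\Delta$. In particular it is a local diffeomorphism there, since $\dim L = 2 = \dim\Delta$, and so it is a submersion. Thus both $T$ and $K$ are visible Lagrangians with the same image $\Delta$ under the moment map.

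Finally, $T$ and $K$ are not homeomorphic: $T$ is orientable and $K$ is not, and their first homology groups $\IZ^2$ and $\IZ\oplus\IZ/2$ also differ. Hence the image of a visible Lagrangian under $\mu$ does not determine its topology, which proves the corollary. The only step with real content is \cref{thm:mainA} itself, which we are allowed to assume. The one point to check carefully is that ``covering map'' in property (2) is meant over the interior, so that no boundary behaviour interferes with the submersion condition; the definition of visibility only asks for this away from $\partial\Delta$.
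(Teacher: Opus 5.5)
Your proposal is correct and is the paper's own argument: the corollary is read off from \cref{thm:mainA}, because the torus and the Klein bottle covering the trapezoid are visible Lagrangians with the same image but different topology (the paper treats covering the polytope as the full-dimensional extremal case of visibility). Your explicit visibility check is a reasonable addition, but ``covering map'' has to be read in the smooth sense, since the dimension count alone does not give a local diffeomorphism (a smooth homeomorphism such as $x \mapsto x^3$ is not a submersion); this smooth reading is what the paper's proof of \cref{thm:mainA} provides when it shows that $\mu\vert_{\psi_1^F(L)}$ is a submersion.
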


\subsection{The Spread Construction}
The main ingredient to prove \cref{thm:mainA} is a geometric idea using symplectic reduction which we call \emph{spread construction}. Roughly speaking, a smooth family of Hamiltonian diffeomorphisms $\{\varphi^c\}$ on an $S^1$-symplectic quotient can be \emph{spread across} neighboring level sets in the sense that the so-obtained map is again a Hamiltonian diffeomorphism and it acts by $\varphi_c$ on the reduced space at the level $c$. See \cref{thm:mainC} for a precise formulation. Obviously, the neighboring reduced spaces should be sufficiently similar to the one we have started with in an appropriate sense. As this construction may be of independent interest, let us give a detailed outline. 

Let $(M,\omega)$ be a symplectic manifold and $H \colon M \rightarrow \IR$ the moment map of a Hamiltonian $S^1$-action that admits symplectic reduction on the level sets $H^{-1}(c)$ for all $c$ in some segment $(a,b) \subset \IR$. The fact that neighboring reduced spaces are diffeomorphic whenever no singular value is crossed is classical. As the level $H^{-1}(c)$ varies, the symplectic form changes as follows: Assume $0\in (a,b)$ and let $(B,\omega_B)$ be the symplectic quotient at this level. The symplectic forms $\omega_c$ on the reduced spaces $M_c$ vary linearly in $c$, more precisely
\begin{equation*}
    (M_c,\omega_c)\cong (B,\omega_B + c \, F) ,
\end{equation*}
where $F\in \Omega^2(B)$ is the curvature form of some connection on the $S^1$-reduction bundle $H^{-1}(0)\to B$. We refer to the original paper \cite{duistermaat1982variation} by Duistermaat and Heckman for details. The symplectic quotients hence agree up to scaling whenever the curvature is proportional to the symplectic form, that is, if there is $\lambda \in \IR$ such that $F = \lambda \omega_B$, in which case the above simplifies to
\begin{equation*} 
    (M_c,\omega_c) \cong (B,(1+\lambda c)\;\omega_B).
\end{equation*}
A connection with this property exists precisely if the cohomology class $ [\omega_B]\in H^2(B;\IR)$ is a real multiple of the Chern class $c_1(H^{-1}(0)\to B)\in H^2(B;\IZ)$. In view of this, we say that the reduction bundle $H^{-1}(0) \to B$ satisfies the \emph{scaling condition} if there exists $\lambda \in \IR$ such that 
\begin{equation}\label{eq:scaling}
    \lambda [\omega_B] = c_1\left(H^{-1}(0) \to B \right)\in H^2(B;\IZ).
\end{equation}
It follows immediately from the results of Duistermaat and Heckman that if the scaling condition holds at $0$, then it holds at every level $c\in (a,b)$.

Now let $\{\varphi^c \in \Ham(B,\omega_B)\}_{c \in (a,b)}$ be a smooth family of Hamiltonian diffeomorphisms on the model $(B,\omega_B)$ of the symplectic quotients. Every $\varphi^c$ can be viewed as a Hamiltonian diffeomorphism on $(M_c,\omega_c)$, since $\omega_c$ coincides with $\omega_B$ up to scaling by \eqref{eq:scaling}. The \emph{spread} of the family $\{\varphi^c\}$ is a Hamiltonian diffeomorphism $\phi$ of $H^{-1}(a,b) \subset M$ which descends to $\varphi^c$ under symplectic reduction at $c\in (a,b)$. To be precise, we prove the following.
\begin{theoremletter}
    \label{thm:mainC}
    Let $(M,\omega,H)$ be a symplectic manifold equipped with a Hamiltonian $S^1$-action generated by $H$ such that the system admits symplectic reduction at all $H=c$ for $c$ in some segment $(a,b)$, and the scaling condition \eqref{eq:scaling} holds with base manifold $(B,\omega_B)$. For any smooth family $\{\varphi^c \in \Ham(B,\omega_B)\}_{c \in (a,b)}$ of Hamiltonian diffeomorphisms, there is a Hamiltonian diffeomorphism $\phi$ of $H^{-1}(a,b) \subset M$ such that
    \begin{enumerate}
        \item $\phi$ commutes with the $S^1$-action generated by $H$,
        \item the residual Hamiltonian diffeomorphism obtained from $\phi$ on the reduced space $M_c$ is equal, under a suitable identification $M_c = B$, to the prescribed Hamiltonian diffeomorphism $\varphi^c$ for every $c \in (a,b)$.
    \end{enumerate}
\end{theoremletter}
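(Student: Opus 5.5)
We plan to construct $\phi$ as the time-one map of a time-dependent Hamiltonian flow on $H^{-1}(a,b)$, generated by an $S^1$-invariant Hamiltonian that is lifted from a family of Hamiltonians on $B$. The first step is to fix a local normal form. Choose a connection $1$-form $\alpha$ on the principal $S^1$-bundle $P := H^{-1}(0) \to B$ whose curvature is $F = \lambda\,\omega_B$. Such a connection exists by the scaling condition \eqref{eq:scaling}: any connection has curvature representing a multiple of $c_1(P)$, and the exact difference can be absorbed into $\alpha$. By the equivariant coisotropic neighbourhood theorem, together with the Duistermaat--Heckman argument, $H^{-1}(a,b)$ is $S^1$-equivariantly symplectomorphic to
\begin{equation*}
    \bigl(P \times (a,b),\; \pi^*\omega_B + \dd(t\,\alpha)\bigr),
\end{equation*}
with $H = t$. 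Here we use that the flow of a gradient-like vector field for $H$ identifies all levels with $P$ when no critical value is crossed. In this model the reduced space at level $c$ is $(B, \omega_B + c\lambda\,\omega_B) = (B,(1+\lambda c)\,\omega_B)$. This gives the identification $M_c = B$ required in statement (2). Note that $1+\lambda c > 0$ on $(a,b)$, since each reduced form is symplectic.

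Next we build the generating Hamiltonian. Since $c\mapsto\varphi^c$ is a smooth family in $\Ham(B,\omega_B)$, we choose a smooth family of normalized Hamiltonians $G^c_s \colon B \to \IR$, $s\in[0,1]$, whose time-one map is $\varphi^c$, depending smoothly on $c$. This smooth choice in the parameter $c$ is the first technical point. We would handle it by writing $\varphi^c = \varphi^{c_0}\circ(\varphi^{c_0})^{-1}\varphi^c$ and using that the groups are locally contractible in the $C^\infty$ topology (Weinstein charts), patching over $(a,b)$ with a partition of unity in $c$ applied to paths. Alternatively, we simply assume that the family comes with smooth generating Hamiltonians, which is the natural reading of ``smooth family''.

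On the reduced space $(B,(1+\lambda c)\,\omega_B)$, the Hamiltonian $(1+\lambda c)\,G^c_s$ generates the same isotopy that $G^c_s$ generates for $\omega_B$. Define the $S^1$-invariant function
\begin{equation*}
    K_s(p,t) := (1+\lambda t)\, G^t_s(\pi(p))
\end{equation*}
on the model. Because $K_s$ is $S^1$-invariant, it Poisson-commutes with $H=t$. Hence its flow preserves each level $\{t=c\}$ and commutes with the $S^1$-action, which gives (1). Moreover, by standard reduction theory, on the quotient of the level $c$ the flow descends to the flow of $K_s|_{t=c} = (1+\lambda c)G^c_s$ with respect to $(1+\lambda c)\omega_B$, that is, to the isotopy of $G^c_s$ for $\omega_B$. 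Its time-one map therefore induces $\varphi^c$, which gives (2).

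The main obstacle is completeness of the flow on the open set $H^{-1}(a,b)$. Since $t$ is preserved, no level is left, and the $S^1$-orbits are compact. However, $B$ need not be compact, so we must check that the horizontal component does not escape. The horizontal component projects to the flow of $G^c_s$ on $B$, which exists for all $s\in[0,1]$ because $\varphi^c$ is Hamiltonian (compactly supported, if $B$ is noncompact). The vertical component is then obtained by integrating an ODE in the compact fibre $S^1$. Hence the flow exists up to time one, and $\phi$ is its time-one map. Pulling back along the normal-form symplectomorphism completes the construction.
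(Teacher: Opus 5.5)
Your argument follows the paper's proof. You pass to the Duistermaat--Heckman model $\bigl(P\times(a,b),(1+\lambda t)\pi^*\omega_B+\dd t\wedge\alpha\bigr)$ and take the spread $K_s=(1+\lambda t)\,G^t_s\circ\pi$. You then combine reduction with the rescaling $\omega_c=(1+\lambda c)\omega_B$ to see that the level-$c$ flow descends to the flow of $G^c_s$. Your completeness check is a harmless addition that the paper leaves implicit.

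The one step that is not justified as written is choosing $G^c_s$ to depend smoothly on $c$.
\begin{itemize}
\item Patching with a partition of unity in $c$ does not work. A convex combination of Hamiltonians (or paths) that each generate $\varphi^c$ has no controlled time-one map, so the patched family no longer generates $\varphi^c$.
\item The Weinstein-chart step hides a flux issue. A symplectomorphism near the identity only yields a closed $1$-form, and you need it to be exact to get a Hamiltonian path. That exactness is exactly what Banyaga's theorem supplies.
\item The fallback of taking generating Hamiltonians as part of the data strengthens the hypothesis. The theorem only provides a smooth family of diffeomorphisms.
\end{itemize}

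The paper (\cref{prop:reparametrisation}) avoids any patching by using $c$ itself as the time variable. By Banyaga's theorem, the smooth path $c\mapsto(\varphi^{c_0})^{-1}\circ\varphi^c$ in $\Ham(B,\omega_B)$, which equals $\id$ at $c=c_0$, is generated by a single time-dependent Hamiltonian $h$ with initial time $c_0$. The reparametrised Hamiltonians $(c-c_0)\,h(\cdot,c_0+s(c-c_0))$, $s\in[0,1]$, then generate isotopies from $\id$ to $(\varphi^{c_0})^{-1}\circ\varphi^c$ that depend smoothly on $c$. Composing with one fixed Hamiltonian path ending at $\varphi^{c_0}$, via the composition formula, gives the smooth family you need. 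With this substitution your proof is complete.
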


\begin{definition}
    Let $(M,\omega,H)$ and $\{\varphi^c \in \Ham(B,\omega_B)\}_{c \in (a,b)}$ be as in the hypothesis of the above theorem. Then the resulting Hamiltonian diffeomorphism $\phi$ is called the \emph{spread} of the family $\{\varphi^c\}$.
\end{definition}

Here is a simple example where the hypotheses of \cref{thm:mainC} are met. 

\begin{example}
   Let $M = \C^{n+1}$ be equipped with the standard symplectic form and let $H = \pi \vert z_0 \vert^2 + \ldots + \pi \vert z_n \vert^2$. This system admits symplectic reduction for all $c > 0$ and its reduced spaces are symplectomorphic to $\C P^n$ equipped with $c \omega_{\C P^n}$, where $\omega_{\C P^n}$ denotes the Fubini--Study form with normalization $\int_{\C P^1} \omega_{\C P^n} = 1$. The scaling condition is satisfied and thus any family $\{\varphi^c \in \Ham(\C P^n,\omega_{\C P^n})\}_{c \in (a,b)}$ of Hamiltonian diffeomorphisms for $0 < a < b$ can be \emph{spread} across $\C^{n+1}$.
\end{example}

In \cref{sec:spreadconstruction}, we furthermore show that the spread construction is compatible with toric actions and symplectic cuts by the Hamiltonian $H$. This is essential for the proof of \cref{thm:mainA} since we construct the Hirzebruch surfaces using symplectic cuts and rely on the toric structure to identify the resulting space as a Hirzebruch surface.

\subsection{Strategy of the Proof}
\begin{figure}[h]
        \centering
        \begin{tikzpicture}[>=stealth,scale = 0.5, line width=1pt]
            \draw [fill=lightgray, fill opacity= 0.1, line width = 1pt] (-0.5,0) -- (-0.5,4) -- (3.5,4) -- (3.5,0) -- cycle;
            \draw[line width = 1pt, color = PineGreen] (2.5,0) -- (0.5,4) node[midway, sloped, above, color=PineGreen, near end] {\scriptsize Klein} node[midway,sloped,below,color=PineGreen, near end]{\scriptsize bottle};
            
            \draw [fill=lightgray, fill opacity = 0.1, line width = 1pt] (6,0) -- (6,4) -- (10,4) -- (14,0) -- cycle;
            \draw[line width = 1pt, color =  PineGreen] (8,0) -- (8,4) node[midway, sloped, above, color=PineGreen, near end] {\scriptsize Torus};
            
            \draw[line width = 1pt, color=NavyBlue] (-0.5,1) -- (3.5,1) node[color=NavyBlue, above,midway]{\scriptsize Reduction} node[color=NavyBlue, below,midway]{\scriptsize level};
            \draw[line width = 1pt, color=NavyBlue] (6,1) -- (13,1) node[color=NavyBlue, above,midway]{\scriptsize Reduction} node[color=NavyBlue, below,midway]{\scriptsize level};
            
            \fill (8,1) circle[radius=3pt];
            \fill (2,1) circle[radius=3pt];

% Draw the square
\draw[color=NavyBlue] (0,-8) -- (0,-4) node[near start, right]{\footnotesize $S^1$};
\draw (0,-8) -- (4,-8);
\draw (0,-4) -- (4,-4);
\draw (4,-8) -- (4,-4);

% Horizontal arrows (one each)
\draw[->] (2,-8) -- ++(.2,0);   % left to right
\draw[<-] (2,-4) -- ++(-.2,0);  % right to left

% Vertical arrows (two each)
\draw[->] (0,-6) -- ++(0,.2);   % bottom to top
\draw[->] (0,-5.8) -- ++(0,.2); % second arrow lower
\draw[->] (4,-5.8) -- ++(0,.2);  % top to bottom
\draw[->] (4,-6) -- ++(0,.2);% second arrow upper

\draw[color=PineGreen] (0,-8) -- (4,-6) node[midway, above, sloped]{\footnotesize $L$};
\draw[color=PineGreen] (0,-6) -- (4,-4);

% Draw second square
\draw[color=NavyBlue] (6,-8) -- (6,-4) node[near start, right]{\footnotesize $S^1$};
\draw[color=PineGreen] (6,-8) -- (10,-8);
\draw (6,-4) -- (10,-4);
\draw (10,-8) -- (10,-4);

% arrows for second square
\draw[->] (8,-8) -- ++(.2,0);   % left to right
\draw[<-] (8,-4) -- ++(-.2,0);  % right to left

\draw[->] (6,-6) -- ++(0,.2);   % bottom to top
\draw[->] (6,-5.8) -- ++(0,.2); % second arrow lower
\draw[->] (10,-5.8) -- ++(0,.2);  % top to bottom
\draw[->] (10,-6) -- ++(0,.2);% second arrow upper

% rest of H
\draw[color=PineGreen] (6,-6) -- (10,-6) node[midway, above]{\footnotesize $L$};

\draw[-stealth,shorten <= 0.25cm,shorten >= 0.25cm] (2,-4) to[out=60,in=-32] node[midway,right]{\scriptsize $\mu$} (2,1);
\draw[-stealth,shorten <= 0.25cm,shorten >= 0.25cm] (8,-4) to[out=120,in=212] node[midway,left]{\scriptsize $\mu$}(8,1);

\end{tikzpicture}
        \caption{Fibers above the intersection of $\mu(L)$ with a reduction level.}
        \label{fig:Fibers}
\end{figure}
Let $M = (\FF_n,\omega_n)$ be the $n$-th Hirzebruch surface. Then $M$ contains a Lagrangian torus as real locus whenever $n$ is even, and a Lagrangian Klein bottle as real locus whenever $n$ is odd. For concreteness, let us discuss the case of $M=S^2\times S^2$. The toric moment map is simply the product of height functions on the factors. The real locus is given by taking the product of meridians on the sphere factors. Thus in order to prove \cref{thm:mainA} in the case of $S^2\times S^2$, we need to exhibit a Lagrangian Klein bottle which covers the moment polytope. It is also well-known that the Klein bottle can be realized as a visible Lagrangian in $S^2 \times S^2$, equipped with any symplectic form. Indeed, suppose, after possibly exchanging the coordinates of $\IR^2$, that the horizontal side of the rectangle is larger than or equal to the vertical side. Then any segment of slope $(1,-2)$ intersecting the relative interior of the horizontal edges of the rectangle defines a visible Lagrangian Klein bottle, see \cref{fig:Fibers}. See for example \cite[Exercise 5.19]{evans2023lectures} for more details.

\begin{figure}[h]
    \centering
    \begin{tikzpicture}
        
  \shade[ball color = NavyBlue, opacity = 0.2] (-4,0) circle (1cm);
  \draw[color=NavyBlue] (-4,0) circle (1cm);
  \draw[color=NavyBlue] (-5,0) arc (180:360:1 and 0.25);
  \draw[dashed, color=NavyBlue] (-3,0) arc (0:180:1 and 0.25);
  \draw[ color=PineGreen, line width =1pt] (-3.5,-0.86602540378) arc (-90:90:0.15 and 0.86602540378 );
  \draw[dashed, color=PineGreen, line width =1pt] (-3.5,0.86602540378) arc (90:270:0.15 and 0.86602540378 );  
  %\node at (-4,-1.5) {$c=0$};

  \shade[ball color = NavyBlue, opacity = 0.2] (0,0) circle (1cm);
  \draw[color=NavyBlue] (0,0) circle (1cm);
  \draw[color=NavyBlue] (-1,0) arc (180:360:1 and 0.25);
  \draw[dashed, color=NavyBlue] (1,0) arc (0:180:1 and 0.25);
  \draw[dashed, color=PineGreen, line width =1pt] (0,1) arc (90:270:0.25 and 1);
  \draw[ color=PineGreen, line width =1pt] (0,-1) arc (-90:90:0.25 and 1);  
  %\node at (0,-1.5) {$c=0.5$};
   
  \shade[ball color = NavyBlue, opacity = 0.2] (4,0) circle (1cm);
  \draw[color=NavyBlue] (4,0) circle (1cm);
  \draw[color=NavyBlue] (3,0) arc (180:360:1 and 0.25);
  \draw[dashed, color=NavyBlue] (5,0) arc (0:180:1 and 0.25);  
  
 \draw[ color=PineGreen, line width =1pt] (3.5,-0.86602540378) arc (-90:90:0.15 and 0.86602540378 );
  \draw[dashed, color=PineGreen, line width =1pt] (3.5,0.86602540378) arc (90:270:0.15 and 0.86602540378 );
 %\node at (4,-1.5) {$c=1$};

 % Squares above the three spheres
\draw [fill=lightgray, fill opacity= 0.1, line width = 1pt] (-5,3) rectangle (-3,5);   % above (-4,0)
\draw [fill=lightgray, fill opacity= 0.1, line width = 1pt](-1,3) rectangle (1,5);    % above (0,0)
\draw [fill=lightgray, fill opacity= 0.1, line width = 1pt](3,3) rectangle (5,5);     % above (4,0)

% Klein bottle lines inside squares (slope -2)
\draw[color=PineGreen, line width=1pt] (-4,4) ++(-0.5,1) -- ++(1,-2) node[color=PineGreen, above, near start,sloped]{\scriptsize $\mu(L)$};
\draw[color=PineGreen, line width=1pt] (0,4)  ++(-0.5,1) -- ++(1,-2) node[color=PineGreen, above, near start,sloped]{\scriptsize $\mu(L)$};
\draw[color=PineGreen, line width=1pt] (4,4)  ++(-0.5,1) -- ++(1,-2) node[color=PineGreen, above, near end,sloped]{\scriptsize $\mu(L)$};

% Horizontal NavyBlue lines inside squares
\draw[color=NavyBlue, line width=1.5pt] (-5,3) -- (-3,3) node[color=NavyBlue,above,near start]{\scriptsize $\mu_2^{-1}(c)$};   % left: bottom edge
\draw[color=NavyBlue, line width=1.5pt] (-1,4) -- (1,4) node[color=NavyBlue,above,near start]{\scriptsize $\mu_2^{-1}(c)$};    % middle: center
\draw[color=NavyBlue, line width=1.5pt] (3,5) -- (5,5) node[color=NavyBlue,above,midway]{\scriptsize $\mu_2^{-1}(c)$};     % right: top edge

% Vertical arrows with double head at the bottom

\draw[-{>>}] (-4,2.45) -- (-4,1.5);
\draw[-{>>}] (0,3.45)  -- (0,1.5);
\draw[-{>>}] (4,4.45)  -- (4,1.5);

% Labels at uniform height y=2

\node at (-3.7,2) {\scriptsize $/ S^1$};
\node at (0.3,2)  {\scriptsize $/ S^1$};
\node at (4.3,2)  {\scriptsize $/ S^1$};

% Left square
\draw[decorate, decoration={brace, amplitude=10pt, mirror, raise=0.1cm}] 
    (-4-0.9,3) -- (-4+0.9,3);

% Middle square
\draw[decorate, decoration={brace, amplitude=10pt, mirror, raise=0.1cm}] 
    (0-0.9,4) -- (0+0.9,4);

% Right square
\draw[decorate, decoration={brace, amplitude=10pt, mirror, raise=0.1cm}] 
    (4-0.9,5) -- (4+0.9,5);

    \end{tikzpicture}
    \caption{Family of Lagrangians in $B \simeq S^2$ after reduction.}
    \label{fig:Lagrangians}
\end{figure}

Obviously, this visible Klein bottle does \emph{not} cover the moment polytope. The main idea of the proof is to deform it until it does cover the polytope. To that end, we apply the spread construction and \cref{thm:mainC} to $M=S^2 \times S^2$ and the Hamiltonian $H = \mu_2$ corresponding to the vertical component of the moment map. This makes sense, since the restriction of $\mu_2$ to the visible Klein bottle is already a surjection and the spread allows us to modify the projection of the Klein bottle under the $\mu_1$ component. The reduced spaces $M_c$ can all be identified with the horizontal factor in $S^2 \times S^2$. Since the visible Klein bottle we start with intersects every level of $\mu_2$ in a single point in the rectangle, we find that the projection of the Klein bottle to the reduced space is equal to a circle of constant height in the reduced space, see \cref{fig:Lagrangians}. Note that the circle depends on the level at which we reduce.

\begin{figure}[h]
    \centering
    \begin{tikzpicture}
        
  \shade[ball color = NavyBlue, opacity = 0.2] (-4,0) circle (1cm);
  \draw[color=NavyBlue] (-4,0) circle (1cm);
  \draw[color=NavyBlue] (-5,0) arc (180:360:1 and 0.25);
  \draw[dashed, color=NavyBlue] (-3,0) arc (0:180:1 and 0.25);
  \draw[ color=PineGreen, line width =1pt] (-4.5,-0.86602540378) arc (-90:90:0.15 and 0.86602540378 );
  \draw[dashed, color=PineGreen, line width =1pt] (-4.5,0.86602540378) arc (90:270:0.15 and 0.86602540378 );  
  %\fill[fill=NavyBlue  ,fill opacity=1 ] (-1,0) circle (1pt);
%\fill[fill=NavyBlue  ,fill opacity=1 ] (1,0) circle (1pt);
   
  \shade[ball color = NavyBlue, opacity = 0.2] (0,0) circle (1cm);
  \draw[color=NavyBlue] (0,0) circle (1cm);
  \draw[color=NavyBlue] (-1,0) arc (180:360:1 and 0.25);
  \draw[dashed, color=NavyBlue] (1,0) arc (0:180:1 and 0.25);
  \draw[dashed, color=PineGreen, line width =1pt] (0.86602540378,0.5 ) arc (0:180:0.86602540378 and 0.15);
  \draw[ color=PineGreen, line width =1pt] (-0.86602540378,0.5) arc (180:360:0.86602540378 and 0.15);  
  %\fill[fill=NavyBlue  ,fill opacity=1 ] (-1,0) circle (1pt);
%\fill[fill=NavyBlue  ,fill opacity=1 ] (1,0) circle (1pt);
   
  \shade[ball color = NavyBlue, opacity = 0.2] (4,0) circle (1cm);
  \draw[color=NavyBlue] (4,0) circle (1cm);
  \draw[color=NavyBlue] (3,0) arc (180:360:1 and 0.25);
  \draw[dashed, color=NavyBlue] (5,0) arc (0:180:1 and 0.25);  
  
 \draw[color=PineGreen, line width =1pt,domain=3:5,smooth]   plot (\x,{0.35*(cos(1.2*pi *(\x-4) r)- cos(1.2*pi r))});
 \draw[color=PineGreen, line width =1pt, dashed, domain=3:5,smooth]   plot (\x,{0.45*((cos(0.8 *pi *(\x-4)  r)- cos(0.8*pi r))});
 %\fill[fill=NavyBlue  ,fill opacity=1 ] (-1,0) circle (1pt);
%\fill[fill=NavyBlue  ,fill opacity=1 ] (1,0) circle (1pt);

\draw[-stealth] (-2.75,0) -- node[above]{\scriptsize Rotate} (-1.25,0);
\draw[-stealth] (1.25,0) -- node[above]{\scriptsize Stretch} (2.75,0);
    \end{tikzpicture}
    \caption{Sketch of how to choose the family $\{\varphi^c\}$ on $B$.}
    \label{fig:Family}
\end{figure}
For every $c$, let $\varphi^c$ be a Hamiltonian diffeomorphism of the sphere which maps the circle in question to a Lagrangian circle covering the moment polytope of $S^2$. A visual depiction of such a family is given in \cref{fig:Family}.

Applying the spread construction (\cref{thm:mainC}) to this family yields the claim, since it spreads the Klein bottle in the $\mu_1$-direction and makes it surject to the rectangle. It is not hard to check that the resulting copy of the Klein bottle covers the moment polytope. Again, let us stress that the case of the other Hirzebruch surfaces is similar, see the right-hand side of \cref{fig:Fibers} and \cref{table:overview}.

\begin{table}[h]
\centering

\setlength{\tabcolsep}{8pt}
\renewcommand{\arraystretch}{1.5}

\begin{tabular}{|c|r||C|C|}
\cline{3-4}
\multicolumn{1}{l}{} &
&
\multicolumn{2}{c|}{\textbf{Hirzebruch Surface}}
\\
\cline{3-4}

\multicolumn{1}{l}{}&
&
Even
&
Odd
\\
\hline\hline

\multirow{7.5}{*}{%
    \rotatebox[origin=c]{90}{ \textbf{Visible Lagrangian}
    }%
}
&
\multirow{2}{*}{Torus}
&
Real Locus
&
Spread of
\\
&
&
\eventorus
&
\oddtorus
\\
\cline{2-4}

&
\multirow{2}{*}{Klein Bottle}
&
Spread of
&
Real Locus
\\
&
&
\evenklein
&
\oddklein
\\
\hline

\end{tabular}
\bigskip
\label{table:overview}
\caption{Overview of construction of visible Lagrangians in \cref{thm:mainA}. }
\end{table}

\vspace{0.5cm}

\textbf{Acknowledgements.} We warmly thank Ana Cannas da Silva for her generosity and help. The idea for this project arose out of discussions with her. JB also thanks Ana and the FIM at ETH for hosting a marvelous visit at the start of 2024, during which the groundwork of this article was laid. We thank Felix Schlenk for many helpful comments on an earlier version of this article.

JB is supported by SNSF Ambizione Grant PZ00P2-223460.

\section{Preliminaries about Hamiltonian Isotopies}
The purpose of this section is to collect several standard constructions
relating Hamiltonian isotopies under basic geometric operations that will
appear repeatedly below: rescaling of symplectic forms, symplectic
reduction, symplectic cutting, and reparametrization of Hamiltonian paths.

\medskip

\noindent\textbf{Notation and conventions.} Following \cite{polterovich2012geometry}, we call a function $F:M\times [0,1]\to \IR$ a \emph{Hamiltonian function} on $M$ if it has compact support and write $F_t = F(\cdot, t)$ for all $t\in [0,1].$ Its time-dependent Hamiltonian vector field is denoted by $X^F_t$ and its Hamiltonian isotopy by $\psi^F_t.$ 

\subsection{Rescaling}
\begin{lemma}[Hamiltonian Isotopies and Rescaling] \label{lem:rescalediffeo}
    Let $i=1,2$, $(M_i,\omega_i)$ be symplectic manifolds and $\nu >0$ a real constant. Let $\chi:(M_1,\omega_1) \to (M_2,\omega_2)$ be a diffeomorphism and $F_i:M_i\to \IR$ (possibly time-dependent) Hamiltonians such that $\chi^*(\omega_2) = \nu\;\omega_1 $ and $\chi^*(F_2)=\nu\;F_1.$ Then
    \begin{equation*}
        \chi \circ \psi_t^{F_1} = \psi_t^{F_2} \circ \chi, \qquad \forall t\in [0,1].
    \end{equation*}
\end{lemma}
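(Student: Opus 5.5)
The plan is to show that the pushforward $\chi_* X^{F_1}_t$ coincides with $X^{F_2}_t$ for every $t$, and then conclude by uniqueness of solutions of ODEs. We use the convention $\iota_{X^F_t}\omega = -\dd F_t$ (or the opposite sign; the argument does not depend on it, as long as the same convention is used on both manifolds).

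Fix $t$ and let $Y := \chi_* X^{F_1}_t$, which is a vector field on $M_2$ since $\chi$ is a diffeomorphism. We compare $\chi^*(\iota_Y \omega_2)$ with $\chi^*(\iota_{X^{F_2}_t}\omega_2)$. Naturality of contraction gives
\begin{equation*}
\chi^*(\iota_Y\omega_2) = \iota_{X^{F_1}_t}\chi^*\omega_2 = \nu\,\iota_{X^{F_1}_t}\omega_1 = -\nu\,\dd F_{1,t}.
\end{equation*}
On the other hand,
\begin{equation*}
\chi^*(\iota_{X^{F_2}_t}\omega_2) = -\chi^*\dd F_{2,t} = -\dd(\chi^*F_{2,t}) = -\nu\,\dd F_{1,t}.
\end{equation*}
Since $\chi^*$ is injective on forms, $\iota_Y\omega_2 = \iota_{X^{F_2}_t}\omega_2$. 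Nondegeneracy of $\omega_2$ then yields $Y = X^{F_2}_t$. The time-dependence of the Hamiltonians causes no difficulty, since the identity $\chi^*F_2 = \nu F_1$ holds at each time $t$.

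Now set $\gamma_t := \chi\circ\psi^{F_1}_t\circ\chi^{-1}$. Then $\gamma_0 = \id$, and
\begin{equation*}
\frac{\dd}{\dd t}\gamma_t = \chi_*X^{F_1}_t\circ\gamma_t = X^{F_2}_t\circ\gamma_t.
\end{equation*}
So $\gamma_t$ and $\psi^{F_2}_t$ solve the same time-dependent ODE with the same initial condition. Compact support of the Hamiltonians ensures both flows exist for all $t\in[0,1]$. By uniqueness, $\gamma_t = \psi^{F_2}_t$, which is the claim.

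No real obstacle arises. The only points needing care are keeping the sign convention consistent on both manifolds and remembering that the constant $\nu$ cancels because it appears on both sides of the identity. Positivity of $\nu$ is in fact not needed for this argument; only $\nu\neq 0$ is used, through nondegeneracy of $\nu\,\omega_1$.
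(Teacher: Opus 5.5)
Your proof is correct. The paper states this lemma without proof as a standard fact, and your argument is precisely the standard justification it relies on: you show $\chi_*X^{F_1}_t = X^{F_2}_t$ via naturality of contraction and nondegeneracy of $\omega_2$, then apply uniqueness for the time-dependent ODE satisfied by $\chi\circ\psi^{F_1}_t\circ\chi^{-1}$. One small quibble about your closing remark: the argument never uses nondegeneracy of $\nu\,\omega_1$, only that of $\omega_2$, and $\nu\neq 0$ is forced by the hypotheses anyway, since $\chi$ is a diffeomorphism.
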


\subsection{Symplectic Reduction}\label{sec:isotopiesandreduction}
Let $(M,\omega)$ be a symplectic manifold equipped with a Hamiltonian $S^1$-action generated by a moment map $r:M\to \IR.$ Assume that the $S^1$-action is free on the level set $r^{-1}(c)$. We obtain a well-defined symplectic quotient
\begin{equation*}
    \begin{tikzcd}
r^{-1}(c) \arrow[r, "i_c", hook] \arrow[d, "\pi_c", two heads] & {(M,\omega)} \\
{(M_c,\omega_c),}                                                &             
\end{tikzcd}
\end{equation*}
with $i_c^*\omega = \pi_c^*\omega_c.$ 

Assume that $F$ is a (possibly time-dependent) Hamiltonian that Poisson-commutes with the moment map $r$. The condition $\{F_t,r\}=0$ ensures that the Hamiltonian flow of $F$ preserves the level set $r^{-1}(c)$ and that $F_t$ is constant on the $S^1$-orbits for every $t$. As a consequence, the Hamiltonian $F$ descends to a well-defined Hamiltonian $F_\resc:M_c\to \IR$ on the reduced space characterised by 
\begin{equation*}
    i_c^* F = \pi_c^*F_\resc.
\end{equation*}
\begin{definition}
    We call $F_\resc$ the \textit{residual Hamiltonian} on $M_c.$
\end{definition}

\begin{proposition}[Hamiltonian Isotopies and Reduction] \label{prop:residualisotopy}
    Let $(M,\omega,r)$ be a Hamiltonian $S^1$-manifold and assume that $S^1$ acts freely on $r^{-1}(c)$. If $F:M\to \IR$ is a Hamiltonian such that $\{F_t,r\} = 0$ for all $t\in [0,1]$, then 
    \begin{equation*}
        \pi_c \circ \psi^F_t = \psi^{F_{\resc}}_t \circ \pi_c \qquad \forall t\in [0,1]
    \end{equation*}
\end{proposition}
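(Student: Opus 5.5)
The plan is to compare vector fields. I would show that for each $t$ the Hamiltonian vector field $X^F_t$ is tangent to $r^{-1}(c)$ and that $\pi_c$ pushes it forward to $X^{F_\resc}_t$. Once this holds, the two curves $t\mapsto \pi_c(\psi^F_t(x))$ and $t\mapsto \psi^{F_\resc}_t(\pi_c(x))$, for $x\in r^{-1}(c)$, solve the same time-dependent ODE on $M_c$ with the same initial value $\pi_c(x)$ at $t=0$. Uniqueness of integral curves then gives the claimed identity on $r^{-1}(c)$, which is where $\pi_c\circ\psi^F_t$ is defined.

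The first step is invariance of the level set. I would use $\dd r(X^F_t) = \{r,F_t\}$, up to the sign convention, which vanishes by hypothesis. Hence $X^F_t$ is tangent to every level of $r$, and the flow $\psi^F_t$ preserves $r^{-1}(c)$. This also makes $\psi^F_t$ restrict to $r^{-1}(c)$.

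The second step is $S^1$-invariance of $F_t$. Here $\dd F_t(X^r) = \pm\{F_t,r\}=0$, so $F_t$ is constant along orbits, and $F_\resc$ is well defined and smooth with $i_c^*F_t=\pi_c^*(F_t)_\resc$.

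The main step is the pushforward identity $\dd\pi_c(X^F_t(x)) = X^{F_\resc}_t(\pi_c(x))$ for $x\in r^{-1}(c)$. Take any $v\in T_x r^{-1}(c)$. Then
\begin{equation*}
\omega_c\bigl(\dd\pi_c X^F_t,\dd\pi_c v\bigr) = (\pi_c^*\omega_c)(X^F_t,v) = (i_c^*\omega)(X^F_t,v) = \omega(X^F_t,v) = \pm\dd F_t(v).
\end{equation*}
The right-hand side equals $\pm\dd (F_t)_\resc(\dd\pi_c v)$, using $i_c^*F_t=\pi_c^*(F_t)_\resc$. Since $\dd\pi_c$ maps $T_x r^{-1}(c)$ onto $T_{\pi_c(x)}M_c$ and $\omega_c$ is nondegenerate, this gives $\dd\pi_c X^F_t = X^{F_\resc}_t$, with the same sign convention on both sides. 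I expect this to be the only real content. The care needed is in keeping the sign convention for $\iota_X\omega$ consistent and in justifying that $X^F_t(x)$ lies in $T_x r^{-1}(c)$ before applying $i_c^*$, which the first step provides.

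Finally, the two integral curves satisfy the same time-dependent ODE with the same initial value, so uniqueness gives the identity for all $t\in[0,1]$. Completeness of the flow of $F_\resc$ follows from the compact support of $F$: the support of $F_\resc$ is contained in $\pi_c(\operatorname{supp}F\cap r^{-1}(c))$, which is compact.
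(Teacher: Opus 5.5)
Your argument is correct and complete. The paper gives no written proof of this proposition: the preceding paragraph only notes that $\{F_t,r\}=0$ makes the flow preserve $r^{-1}(c)$ and makes $F_t$ constant on $S^1$-orbits (your first two steps), and the intertwining itself is left as standard. Your pushforward identity $\dd\pi_c(X^F_t)=X^{F_\resc}_t$ via $\pi_c^*\omega_c=i_c^*\omega$, followed by uniqueness of integral curves (with completeness from the paper's compact-support convention), is the standard argument that fills this in.
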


\subsection{Symplectic Cutting}\label{ssec:cutting}
Symplectic cutting, introduced by Lerman \cite{lerman1995symplectic}, is a surgery on Hamiltonian $S^1$-manifolds, which is intimately related to symplectic reduction. The basic observation is that, under some conditions, one can replace a level set $r^{-1}(c)$ of the moment map $r$ by the symplectic quotient $r^{-1}(c)/S^1$ while keeping its superlevel set $\{r > c\}$ unchanged. These two pieces fit together to a smooth manifold equipped with a natural induced symplectic form. 

Let $(M,\omega,r)$ be a Hamiltonian $S^1$-manifold and assume that the $S^1$-action is free on the level set $r^{-1}(c)$. The most convenient way of defining the symplectic cut space is as a symplectic quotient of an auxiliary Hamiltonian $S^1$-space. To that end, let $(M\times\mathbb{C},\omega\oplus\omega_{\mathbb{C}})$ be equipped with the $S^1$-action generated by the moment map
\[
r_\times : M\times\mathbb{C} \to \mathbb{R}, \qquad
r_\times(p,z) = r(p) - \pi |z|^2 - c.
\]
The $S^1$-action is free on $r_\times^{-1}(0)$, and thus symplectic reduction at this level is allowed. 

\begin{definition}
    \label{def:sympcut}
    The \emph{symplectic cut of $(M,\omega)$ at the level $r = c$} is defined as the symplectic quotient 
    \begin{equation*}
        (M_\cut = r_\times^{-1}(0)/S^1,\omega_\cut).
    \end{equation*}
\end{definition}

There are natural symplectic embeddings $\iota_k$ of codimension $2k \in \{0,2\}$, 
\begin{equation*}
    \iota_0 \colon \{r > c\} \hookrightarrow M_\cut, \quad
    \iota_0^* \omega_\cut = \omega,
\end{equation*}
and 
\begin{equation*}
    \iota_1 \colon r^{-1}(c)/S^1 \hookrightarrow M_\cut, \quad
    \iota_1^* \omega_\cut = \omega_c. 
\end{equation*}

Let $F:M\to\mathbb{R}$ be a Hamiltonian satisfying $\{F,r\}=0$. Then the lift $F_{\times}(p,z) = F(p)$ to the auxiliary space $M\times \C$ appearing in Definition~\ref{def:sympcut} descends to a natural Hamiltonian $F_\cut$ on $M_\cut$.
\begin{definition}
    \label{def:cuthamiltonian}
    We call $F_\cut$ the \emph{cut Hamiltonian} associated to $F$.
\end{definition}
The cut Hamiltonian satisfies 
\[
F_\cut\circ\iota_0 = F|_{\{r>c\}}, \qquad
F_\cut\circ\iota_1 = F_\resc,
\]
where $F_\resc$ denotes the residual Hamiltonian on the reduced space $(M_c,\omega_c)$. Obviously $r$ also yields a cut Hamiltonian $r_\cut$ on the cut space.

\begin{proposition}[Hamiltonian Isotopies and Cutting]
\label{prop:cuttingham}
Let $(M,\omega,r)$ be a Hamiltonian $S^1$-manifold such that $S^1$ acts freely on the level set $r^{-1}(c)$. Let $F:M\to\mathbb{R}$ with $\{F,r\}=0$. The Hamiltonian isotopy of $F_\cut$ is then compatible with those of $F$ and $F_\resc$ in the sense that 
    \begin{equation*}
        \psi_t^{F_\cut} \circ \iota_0 =\iota_0 \circ \psi_t^F  \qq{and} \psi_t^{F_\cut} \circ \iota_1=\iota_1 \circ \psi^{F_\resc}_t  \qquad \forall t\in [0,1].
    \end{equation*}
\end{proposition}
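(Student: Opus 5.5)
The plan is to reduce the statement to \cref{prop:residualisotopy} and the elementary fact that Hamiltonian flows are determined by their vector fields. Applied to the auxiliary space $(M\times\C,\omega\oplus\omega_\C)$, this reduction handles the codimension-two piece $\iota_1$ and the open piece $\iota_0$ separately. First, $F_\times(p,z)=F(p)$ Poisson-commutes with $r_\times$: the bracket splits over the product, and $\{F,r\}=0$ on $M$ while $F_\times$ does not depend on $z$. Hence \cref{prop:residualisotopy} applies to $F_\times$ at the level $r_\times^{-1}(0)$, whose residual Hamiltonian is by definition $F_\cut$. It yields
\begin{equation*}
    \pi_0\circ\psi_t^{F_\times}=\psi_t^{F_\cut}\circ\pi_0,
\end{equation*}
where $\pi_0\colon r_\times^{-1}(0)\to M_\cut$ is the quotient map. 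Moreover $\psi_t^{F_\times}=\psi_t^F\times\id_\C$, since the Hamiltonian vector field of $F_\times$ is $(X_t^F,0)$.

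For $\iota_0$, I would use the standard description $\iota_0(p)=\pi_0\bigl(p,\sqrt{(r(p)-c)/\pi}\bigr)$ on $\{r>c\}$. The flow $\psi_t^F$ preserves $r$ (because $\{F,r\}=0$), so it preserves $\{r>c\}$ and the radius $\sqrt{(r(p)-c)/\pi}$. Combining this with the displayed identity gives $\psi_t^{F_\cut}(\iota_0(p))=\pi_0\bigl(\psi_t^F(p),\sqrt{(r(\psi^F_t p)-c)/\pi}\bigr)=\iota_0(\psi_t^F(p))$.

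For $\iota_1$, note that $r_\times^{-1}(0)\cap\{z=0\}=r^{-1}(c)\times\{0\}$, and $\iota_1$ is induced by the inclusion $r^{-1}(c)\hookrightarrow r_\times^{-1}(0)$, $p\mapsto(p,0)$, which is $S^1$-equivariant. The flow $\psi_t^F\times\id$ preserves this subset. Applying the displayed identity to points $(p,0)$, and then \cref{prop:residualisotopy} for $F$ on $M$ at level $c$ (namely $\pi_c\circ\psi_t^F=\psi_t^{F_\resc}\circ\pi_c$), gives
\begin{equation*}
    \psi_t^{F_\cut}(\iota_1(\pi_c p))=\pi_0(\psi^F_t p,0)=\iota_1(\pi_c\psi_t^F p)=\iota_1(\psi_t^{F_\resc}(\pi_c p)).
\end{equation*}
This holds for all points of $M_c$, since $\pi_c$ is surjective.

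The main obstacle is bookkeeping rather than substance. One point to check is that the identification $\iota_0$ matches the definition used in the paper; any choice of phase for $z$ is harmless because the flow commutes with the $S^1$-action. A second point is compact support: $F_\times$ is not compactly supported in the $\C$-direction. Its flow is nevertheless complete, being $\psi^F_t\times\id$, so \cref{prop:residualisotopy} still applies, either verbatim or after cutting off $F_\times$ away from $r_\times^{-1}(0)$. Note that $r_\times^{-1}(0)$ has $|z|$ bounded on $\operatorname{supp}F$ whenever $r$ is bounded there, which holds because $F$ has compact support.
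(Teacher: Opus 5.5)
Your proof is correct, but it takes a different route from the paper.

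\textbf{The paper's route.} The paper argues intrinsically. Because $\iota_0^*\omega_\cut=\omega$ with $\iota_0^*F_\cut=F$, and $\iota_1^*\omega_\cut=\omega_c$ with $\iota_1^*F_\cut=F_\resc$, the Hamiltonian vector fields are $\iota_k$-related. The flows then intertwine by the same naturality principle as in \cref{lem:rescalediffeo}.

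\textbf{Your route.} You work upstairs in $M\times\C$, where the flow of $F_\times$ is explicitly $\psi^F_t\times\id$. You push it down by applying \cref{prop:residualisotopy} to $F_\times$ at $r_\times^{-1}(0)$, with quotient map $\pi_0\colon r_\times^{-1}(0)\to M_\cut$. Both identities are then read off from the explicit models $\iota_0(p)=\pi_0\bigl(p,\sqrt{(r(p)-c)/\pi}\bigr)$ and $\iota_1(\pi_c p)=\pi_0(p,0)$. For $\iota_1$ you invoke \cref{prop:residualisotopy} a second time, for $F$ on $M$ at level $c$.

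\textbf{What each approach buys.} The paper's version is shorter and does not depend on how $\iota_0$ and $\iota_1$ are realised; any symplectic embeddings pulling $F_\cut$ back correctly would do. However, for the codimension-two embedding $\iota_1$ the two pullback identities alone are not enough. One also needs $X^{F_\cut}$ to be tangent to $\iota_1(M_c)=r_\cut^{-1}(c)$. This holds because $\{F_\cut,r_\cut\}=0$, but it is left implicit in ``follows similarly''.

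In your argument this invariance is automatic, since $\psi^F_t\times\id$ visibly preserves $\{z=0\}$. The same goes for the preservation of $\{r>c\}$ needed for $\iota_0$. The price is that you rely on the standard explicit formulas for $\iota_0$ and $\iota_1$. You also have to deal with $F_\times$ not being compactly supported, which you handle correctly via completeness of $\psi^F_t\times\id$, or via an $|z|^2$-dependent cutoff that still Poisson-commutes with $r_\times$.
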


\begin{proof}
By the discussion above, we have $\iota_0^* \omega_\cut = \omega$ and $\iota_0^*F_\cut = F$, which proves the first case. The second case follows similarly from $\iota_1^* \omega_\cut = \omega_c$ and $\iota_1^* F_\cut = F_\resc$
\end{proof}

\subsection{Smooth Paths of Hamiltonians}
\begin{proposition}\label{prop:reparametrisation}
Let $(M,\omega)$ be a symplectic manifold and 
$\{\varphi^c:M\to M\}_{c\in[0,1]}$ a smooth path of Hamiltonian
diffeomorphisms. Then there exists a smooth family of Hamiltonians
$\{f^c:M\times[0,1]\to\mathbb{R}\}_{c\in[0,1]}$ such that
\begin{equation*}
\varphi^c = \psi^{f^c}_1 \qquad \forall c\in[0,1].
\end{equation*}
\end{proposition}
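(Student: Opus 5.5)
We follow the standard construction that recovers a generating Hamiltonian from an isotopy, carried out with $c$ as a parameter. It helps to read "smooth path of Hamiltonian diffeomorphisms" as meaning that the map $(c,x)\mapsto\varphi^c(x)$ is smooth on $[0,1]\times M$. Since each $\varphi^c$ is Hamiltonian, we also assume, as is standard for such paths, that $\varphi^0=\psi^{g}_1$ for some Hamiltonian $g$. The path $c\mapsto\varphi^c$ is then itself generated by a smooth $c$-dependent Hamiltonian. Indeed, set $Y_c := \left(\partial_c \varphi^c\right)\circ(\varphi^c)^{-1}$. Because $\varphi^c$ is a path in $\Ham(M,\omega)$, Banyaga's theorem on the Lie algebra of $\Ham$ says that each $Y_c$ is a Hamiltonian vector field, $\iota_{Y_c}\omega = \dd G_c$. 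Here $G_c$ has compact support, and we normalise it (for instance by support in a fixed compact set, or by zero mean when $M$ is closed) so that it is unique and depends smoothly on $c$. We would take this smooth dependence as the standard input. If the paper's notion of smooth path already includes a smooth generating family, then this step is simply given.

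The key step is to write $\varphi^c$ as a concatenation of two isotopies running in time $t\in[0,1]$. On the first half we follow $\varphi^0$. On the second half we follow the path $s\mapsto\varphi^{cs}$ for $s\in[0,1]$. The path $s\mapsto \varphi^{cs}$ is generated by the Hamiltonian $c\,G_{cs}$, by the chain rule: $\partial_s\varphi^{cs} = c\,Y_{cs}\circ\varphi^{cs}$. Fix a smooth non-decreasing $\beta\colon[0,1]\to[0,1]$ with $\beta\equiv0$ near $0$, $\beta \equiv 1$ near $1$, and $\beta(t)=1$ exactly for $t \ge 1/2$. Fix a second such cutoff $\alpha$ that reaches $1$ by time $1/2$ and is constant afterwards. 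We then define
\begin{equation*}
f^c(x,t) = \alpha'(t)\, g\bigl(x,\alpha(t)\bigr) + \gamma'(t)\, c\, G_{c\gamma(t)}(x),
\end{equation*}
where $\gamma\colon[0,1]\to[0,1]$ is smooth, non-decreasing, vanishes on $[0,1/2]$ and equals $1$ near $1$. The two terms then have disjoint time supports. The reparametrisation rule, whose time-dependent version follows from the same chain-rule computation, shows that the flow of $f^c$ is $\psi^{g}_{\alpha(t)}$ for $t\le 1/2$. For $t\ge1/2$ it is $\varphi^{c\gamma(t)}\circ(\varphi^0)^{-1}\circ\varphi^0=\varphi^{c\gamma(t)}$, where we use $\varphi^{c\cdot 0}=\varphi^0$. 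Thus $\psi^{f^c}_1=\varphi^c$.

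Smoothness of the family $\{f^c\}$ in $(c,x,t)$ is immediate from the smoothness of $G_c$ in $c$ and from the cutoffs. Compactness of supports also holds uniformly, since $c$ ranges over the compact set $[0,1]$. The main obstacle is the first step: making $G_c$ depend smoothly on $c$ with uniformly compact support when $M$ is open. There I would argue via the flux. The vector field $Y_c$ is smooth in $c$ and symplectic, and the closed forms $\iota_{Y_c}\omega$ are exact. For each $c$ the primitive is unique once we fix its value to be zero outside a large compact set, and it can be computed by integrating along paths from a fixed base point outside that set. That integral formula is visibly smooth in $c$.
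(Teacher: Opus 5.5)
Your proof is correct and follows essentially the paper's route. Both arguments use three ingredients: Banyaga's theorem (the paper cites Proposition 1.4.B of Polterovich's book) applied to the $c$-path translated by $\varphi^0$, the reparametrisation by $c$, and combination with a fixed Hamiltonian generating $\varphi^0$. The differences are cosmetic: you right-translate to $\varphi^c\circ(\varphi^0)^{-1}$ and concatenate in time with cutoffs, whereas the paper left-translates to $(\varphi^0)^{-1}\circ\varphi^c$ and uses the composition formula $f^c_t = H_t + f^c_{\iso,t}\circ(\psi^H_t)^{-1}$.

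Your concern about the normalisation and smooth $c$-dependence of $G_c$ is exactly what the paper absorbs into that citation, which yields a single smooth $F$ on $M\times[0,1]$. The cutoff $\beta$ you introduce is never used and can be deleted.
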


\begin{proof}
We define
\begin{equation*}
\varphi_\iso^c := (\varphi^0)^{-1}\circ \varphi^c,
\end{equation*}
such that $\varphi_\iso^0=\id$ and thus $\{\varphi_\iso^c\}$ is a Hamiltonian isotopy.
By Banyaga's Theorem, see also \cite[Proposition 1.4.B]{polterovich2012geometry}, there exists a Hamiltonian
$F:M\times[0,1]\to\mathbb{R}$ such that
\begin{equation*}
\varphi_\iso^c = \psi^F_c .
\end{equation*}
For every $c\in[0,1]$ define the Hamiltonian
\begin{equation*}
f_\iso^c(p,t) := c\,F(p,ct).
\end{equation*}
By the reparametrization property of Hamiltonian flows, see \cite[Exercise 1.4.A]{polterovich2012geometry}, $$\psi^{f_\iso^c}_t = \psi^F_{ct},$$ in particular
$
\psi^{f_\iso^c}_1 = \psi^F_c = \varphi_\iso^c.
$

Choose any Hamiltonian $H:M\times[0,1]\to\mathbb{R}$ such that
$\psi^H_1=\varphi^0$ and define
\begin{equation*}
f^c_t := H_t + f_{\iso,t}^c \circ (\psi^H_t)^{-1}
\end{equation*}
for each $c$. The standard composition formula for Hamiltonian flows implies
\begin{equation*}
\psi^{f^c}_t = \psi^H_t \circ \psi^{f_\iso^c}_t,
\end{equation*}
and hence
\begin{equation*}
\psi^{f^c}_1
= \psi^H_1 \circ \psi^{f_\iso^c}_1
= \varphi^0 \circ \varphi_\iso^c
= \varphi^c. \qedhere
\end{equation*}
\end{proof}

\section{The Spread Construction} \label{sec:spreadconstruction} 
The goal of this section is to define and discuss the spread construction. We start by recollecting facts surrounding the Duistermaat--Heckman normal form theorem for symplectic quotients, before moving to the spread construction and then discussing its compatibility with symplectic cuts. The last subsection is dedicated to lifting torus actions from the base to Duistermaat-Heckman model spaces.

\subsection{The Duistermaat--Heckman Theorem}
The name \emph{Duistermaat--Heckman theorem} usually refers to the statement that the cohomology class of the reduced symplectic form varies linearly with the reduction level. For our purpose, we need the geometric refinement describing the symplectic forms themselves. However, this description depends on the choice of a connection on the reduction bundle. Instead of passing to cohomology, we will impose a condition on this connection. 
\begin{theorem}[Duistermaat--Heckman Theorem for $S^1$-actions]
    Let $(M,\omega,H)$ be a symplectic manifold equipped with a Hamiltonian $S^1$-action generated by $H:M\to \IR$. Assume that $S^1$ acts freely on $H^{-1}(0)$ and let $(B,\omega_B)$ denote the symplectic quotient. Then there exist $a<0<b$ such that $M$ admits a symplectic quotient $(M_c,\omega_c)$ at all $c\in (a,b)$ and $$(M_c,\omega_c)\cong (B,\omega_B + c\, F),$$ where $F\in \Omega^2(B)$ is the curvature form of a connection on the principal circle bundle $H^{-1}(0) \to B.$
\end{theorem}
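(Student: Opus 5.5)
The plan is to prove the statement through the standard coisotropic embedding / local normal form for a free Hamiltonian $S^1$-action near a regular level. Set $P := H^{-1}(0)$. Freeness of the action on $P$ makes $0$ a regular value of $H$: for $p\in P$ we have $dH_p(v)=\omega(X_H,v)$, and this vanishes identically only if $X_H(p)=0$, which would make $p$ a fixed point. So $P$ is a smooth compact-free submanifold and $\pi\colon P\to B=P/S^1$ is a principal circle bundle.

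\textbf{Step 1: the model space.} I fix a connection $1$-form $\alpha\in\Omega^1(P)$, normalized so that $\alpha(X)=1$ on the generator $X$ of the action (with the paper's normalization $\pi|z|^2$, one may need a factor $2\pi$). Its curvature $F\in\Omega^2(B)$ is defined by $d\alpha=\pi^*F$. On $P\times(a,b)$ with coordinate $s$ I define
\[
\omega_{\mathrm{mod}} = \pi^*\omega_B + d(s\,\alpha) = \pi^*(\omega_B + s F) + ds\wedge\alpha .
\]
This form is closed and $S^1$-invariant, and its moment map for the action on the $P$ factor is $s$, up to sign conventions. It is nondegenerate wherever $\omega_B+sF$ is nondegenerate on $B$. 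That condition holds at $s=0$, and for $B$ compact it holds on some interval $(a,b)\ni 0$. In the noncompact case I would instead shrink to a neighborhood of $P$ rather than a product, and then later restrict to levels on which it is still a product.

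\textbf{Step 2: equivariant identification.} First I choose an $S^1$-invariant vector field $Y$ near $P$ with $dH(Y)=1$. For instance, take $Y=J\nabla H/|\nabla H|^2$ for an invariant compatible metric, or average any transverse field. Its flow gives an equivariant diffeomorphism $\Phi\colon P\times(a',b')\to U\subset M$ with $H\circ\Phi=s$. The two forms $\Phi^*\omega$ and $\omega_{\mathrm{mod}}$ then agree on $P\times\{0\}$ after restricting to $TP$, since both restrict to $\pi^*\omega_B$. Both are invariant, and both have the same moment map $s$. Then I apply an equivariant Moser argument to the path $\omega_t=(1-t)\omega_{\mathrm{mod}}+t\Phi^*\omega$. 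On $P$ the two forms agree on all of $T(P\times\mathbb R)$, because each contracts $X$ to $-ds$, and $ds$ together with the restriction to $TP$ determines the form (a dimension count using $\iota_X$). Hence $\omega_t$ is nondegenerate near $P$. Write $\Phi^*\omega-\omega_{\mathrm{mod}}=d\beta$, with $\beta$ invariant and vanishing on $P$, via the relative Poincaré lemma averaged over $S^1$. The Moser vector field $V_t$ is defined by $\iota_{V_t}\omega_t=-\beta$. Because $\iota_X(\Phi^*\omega-\omega_{\mathrm{mod}})=0$, I can arrange $\beta(X)=0$: the standard homotopy operator for the radial retraction in $s$ commutes with $\iota_X$ up to terms involving $\iota_X$ of the difference, which vanish. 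With $\beta(X)=0$ we get $V_t(s)=0$, so the Moser flow preserves the levels of $s$.

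\textbf{Step 3: the reduced forms.} Reducing $\omega_{\mathrm{mod}}$ at the level $s=c$ gives exactly $(B,\omega_B+cF)$, since the pullback of $\omega_{\mathrm{mod}}$ to $P\times\{c\}$ is $\pi^*(\omega_B+cF)$. The level-preserving equivariant symplectomorphism from Step 2 then descends to symplectomorphisms $(M_c,\omega_c)\cong(B,\omega_B+cF)$. Freeness on $H^{-1}(c)$ is automatic from the product structure.

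\textbf{Main obstacle.} I expect the hardest part to be Step 2, namely arranging $\beta(X)=0$ so that the Moser isotopy preserves levels. If the homotopy-operator argument fails, the fallback is to avoid Moser altogether. One can read the reduced forms directly: pulling $\Phi^*\omega$ back to $P\times\{c\}$ gives $\pi^*\omega_c$, and $\frac{d}{dc}$ of this equals $\pi^*$ of the curvature of the connection $\iota_Y\Phi^*\omega$ restricted to the level. This follows from Cartan's formula and $\iota_X\omega=-dH$. Integrating in $c$ would then give a curvature form depending on $c$. Such a $c$-dependent curvature is not quite the stated linear formula, so the cleaner route remains the Moser normal form.
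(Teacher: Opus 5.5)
Your route is the paper's. The paper does not prove this theorem directly: it derives it from the explicit model (\cref{prop:DHmodel}) together with the equivariant normal form (\cref{prop:DHNormalForm}), which it cites from the coisotropic embedding theorem. Your equivariant Moser argument is a proof of that normal form.

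One step is false as written. In Step 2 you claim that along $P\times\{0\}$ the forms $\Phi^*\omega$ and $\omega_{\mathrm{mod}}$ agree on all of $T(P\times\IR)$, because $\iota_X$ and the restriction to $TP$ determine a $2$-form. They do not. Together with $\sigma|_{TP}$, the condition $\iota_X\sigma=-ds$ fixes $\sigma(\partial_s,X)$, but it leaves $\sigma(\partial_s,v)$ free for $v$ in a complement of $\IR X$ in $TP$.

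Concretely, put $\alpha_Y(v):=\omega(Y,v)$ for $v\in TP$. With the convention $\iota_X\omega=-dH$, this form is invariant and satisfies $\alpha_Y(X)=dH(Y)=1$, so it is a connection form. Along $P$ one has $\Phi^*\omega=\pi^*\omega_B+ds\wedge\alpha_Y$, while $\omega_{\mathrm{mod}}=\pi^*\omega_B+ds\wedge\alpha$. These differ by $ds\wedge(\alpha_Y-\alpha)$, which is nonzero for the arbitrary $\alpha$ you fixed in Step 1.

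The repair is easy, in either of two ways.
\begin{itemize}
\item Choose $\alpha:=\alpha_Y$ in Step 1; the theorem only asks for \emph{some} connection. After this the forms really do agree along $P$.
\item Alternatively, note that along $P$ one has $\omega_t=\pi^*\omega_B+ds\wedge\bigl((1-t)\alpha+t\alpha_Y\bigr)$. A convex combination of connection forms is again a connection form, so $\omega_t$ is nondegenerate there by the splitting argument in part (1) of \cref{prop:DHmodel}.
\end{itemize}
The relative Poincar\'e lemma only uses that the pullback of the difference to $P$ vanishes, which is true.

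There are three smaller points.
\begin{itemize}
\item Your sample field $J\nabla H/|\nabla H|^2$ is a multiple of the generator $X$, so it is tangent to the levels ($dH(J\nabla H)=0$). Take $\nabla H/|\nabla H|^2$ for an invariant metric instead.
\item The ``main obstacle'' is not one. Let $\tau:=\Phi^*\omega-\omega_{\mathrm{mod}}$, and take any invariant primitive $\beta$ of $\tau$ vanishing on $P$. Cartan's formula gives $d(\beta(X))=L_X\beta-\iota_X\tau=0$, so $\beta(X)\equiv 0$ near $P$ automatically. Your homotopy-operator computation is also correct, and the fallback is unnecessary.
\item A uniform interval $(a,b)$, and freeness on all of $H^{-1}(c)$, require $H^{-1}(c)$ to lie in the product neighbourhood for small $|c|$. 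That means $P$ compact and $H$ proper near $0$, which is automatic for compact $M$. The paper's statement tacitly assumes this as well. Without it the conclusion can fail, e.g.\ if other components of $M$ carry fixed points at levels accumulating at $0$.
\end{itemize}
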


The proof of the Duistermaat--Heckman theorem proceeds by identifying a neighborhood of the regular level set $H^{-1}(0)$ with a model symplectic manifold whose reduced spaces can be described explicitly. We call this model symplectic manifold the \emph{Duistermaat--Heckman model}.
Its symplectic quotients are easy to compute and exhibit the linear variation of the reduced symplectic forms. In the following proposition, we introduce this model and summarize its main properties. For simplicity, we restrict to the case that we are interested in, that is, we assume that the curvature is proportional to the symplectic form, resulting in a linear scaling of the symplectic form for the quotients.
\begin{proposition}[Duistermaat--Heckman Model]
\label{prop:DHmodel}
Let $(B,\omega_B)$ be a symplectic manifold, $\lambda\in \IR$ a real constant and $Z\to B$ a principal $S^1$-bundle with connection form $\alpha\in \Omega^1(Z)$ whose curvature is $F=\lambda\omega_B.$
Let $I\subset\R$ be an open interval and set $$(M,\omega):=\left(Z\times I,(1+\lambda r)\pi^*\omega_B+\dd r\wedge\alpha\right),$$ where $r:M\to I$ is the projection to the second factor and $\pi:M\to B$ is the composition of the projection $M\to Z$ and the bundle map $Z\to B$. Then:
\begin{enumerate}
\item $\omega$ is symplectic on $\{1+\lambda r\neq0\}$;
\item The induced $S^1$-action on $(M,\omega)$ is Hamiltonian with moment map
$r:M\to \IR$;
\item For every regular value $c$ of $r$ (equivalently $1+\lambda c\neq 0$), the reduced space $(M_c,\omega_c)$ defined by
\begin{equation*}
    \begin{tikzcd}
        r^{-1}(c) \arrow[r,hook,"i_c"] \arrow[d,two heads,"\pi_c"',"/S^1"] & (M,\omega) \\
        (M_c,\omega_c)
    \end{tikzcd}
\end{equation*}
and $\pi_c^*\omega_c = i_c^*\omega$ is a symplectic manifold. There exists a
diffeomorphism $\chi_c:M_c \longrightarrow B$ such that
\begin{equation*}
    \chi_c \circ \pi_c = \pi \circ i_c \qq{and} \omega_c = (1+\lambda c) \chi_c^*\omega_B
.
\end{equation*}
\end{enumerate}
\end{proposition}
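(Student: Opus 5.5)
The plan is to verify the three claims of \cref{prop:DHmodel} directly from the definition of $\omega$. Two facts about the connection form are used throughout: $\alpha(\xi_Z)=1$ and $\mathcal{L}_{\xi_Z}\alpha=0$, where $\xi_Z$ is the generating vector field of the $S^1$-action on $Z$; and $\dd\alpha=\pi_Z^*F=\lambda\,\pi_Z^*\omega_B$, with $\pi_Z\colon Z\to B$ the bundle map. (If the paper normalizes $\alpha(\xi_Z)$ differently, say to $2\pi$, the constants change but nothing else.) The action on $M=Z\times I$ is the one on the first factor, so its generator $\xi$ is $\xi_Z$ with zero $I$-component.

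\textbf{Closedness.} Since $\omega_B$ is closed,
\[
\dd\omega=\lambda\,\dd r\wedge\pi^*\omega_B-\dd r\wedge\dd\alpha=\lambda\,\dd r\wedge\pi^*\omega_B-\lambda\,\dd r\wedge\pi^*\omega_B=0 .
\]

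\textbf{Nondegeneracy.} Choose a horizontal lift $H\subset TZ$, namely $\ker\alpha$. At each point this gives the splitting
\[
T_{(z,r)}M=H_z\oplus\IR\,\xi\oplus\IR\,\partial_r .
\]
The form $(1+\lambda r)\pi^*\omega_B$ lives on $H_z$ and is nondegenerate there exactly when $1+\lambda r\neq0$. The form $\dd r\wedge\alpha$ pairs $\partial_r$ with $\xi$ nondegenerately and vanishes on $H_z$. Because $\pi^*\omega_B$ kills $\xi$ and $\partial_r$, the two pieces are block diagonal with respect to this splitting. Hence $\omega$ is nondegenerate on $\{1+\lambda r\neq0\}$, which proves (1).

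\textbf{Moment map.} Contract $\omega$ with $\xi$:
\[
\iota_\xi\omega=(1+\lambda r)\,\iota_\xi\pi^*\omega_B+\iota_\xi(\dd r\wedge\alpha)=0+\dd r(\xi)\,\alpha-\alpha(\xi)\,\dd r=-\dd r .
\]
So $r$ is a moment map, with the sign depending on the paper's convention for $X_H$. The action preserves $\omega$ because $\alpha$ is invariant. This proves (2).

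\textbf{Reduction.} Every $c\in I$ is a regular value of $r$, and $r^{-1}(c)=Z\times\{c\}\cong Z$, on which the action is free with quotient $M_c=Z/S^1$. Define $\chi_c\colon M_c\to B$ as the diffeomorphism induced by $\pi_Z$; then $\chi_c\circ\pi_c=\pi\circ i_c$ holds by construction. Pulling back, $i_c^*\dd r=0$, so
\[
i_c^*\omega=(1+\lambda c)\,\pi_Z^*\omega_B=\pi_c^*\bigl((1+\lambda c)\,\chi_c^*\omega_B\bigr).
\]
Since $\pi_c$ is a submersion, this forces $\omega_c=(1+\lambda c)\chi_c^*\omega_B$. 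That form is symplectic exactly when $1+\lambda c\neq0$, which gives (3).

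The only delicate point is normalization. The convention relating curvature to $\dd\alpha$, and the normalization of $\alpha(\xi)$, must be chosen so that the two terms in $\dd\omega$ cancel and the moment map comes out as $r$ with the correct sign. If the conventions differ, the fix is to rescale $r$ or $\lambda$ by a constant.
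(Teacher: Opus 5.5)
Your proof is correct and matches the paper's argument step for step: the same splitting $\ker\alpha\oplus\langle\xi\rangle\oplus\langle\partial_r\rangle$ for nondegeneracy, the same contraction $\iota_\xi\omega=-\dd r$, and the same pullback computation along $i_c$ with $\chi_c$ induced by the bundle projection. The only cosmetic difference is that you check closedness by differentiating directly, whereas the paper rewrites $\omega=\pi^*\omega_B+\dd(r\alpha)$; your normalization caveats are unnecessary, since the paper uses exactly $\alpha(\partial_\theta)=1$ and $\dd\alpha=\pi^*F$.
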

\begin{proof}
    \begin{enumerate}
        \item Using that $\dd \alpha =\pi^* F$ and $F=\lambda \omega_B$, we can write
        \begin{equation*}
            (1+\lambda r)\pi^*\omega_B + \dd r\wedge \alpha= \pi^*\omega_B + \dd (r \alpha)
        \end{equation*}
        so $\omega$ is closed. To prove nondegeneracy we use the splitting of the tangent bundle defined by the connection form $\alpha$: If $\partial_\theta\in \mathfrak{X}(Z)$ is the infinitesimal generator of the principal $S^1$-action, then 
        \begin{equation*}
            T(Z\times I) = \ker(\alpha) \oplus \expval{\partial_\theta} \oplus \expval{\partial_r}.
        \end{equation*}
        $\pi^*\omega_B$ is a nondegenerate form on the horizontal part $\ker(\alpha)\cong\pi^*(TB)$, while $\dd r\wedge \alpha$ is nondegenerate on the vertical direction $\expval{\partial_\theta}$ and the $r$-direction. Hence $(1+\lambda r)\pi^*\omega_B +\dd r\wedge \alpha$ is nondegenerate whenever $1+\lambda r\neq 0$.
        \item Let $\partial_\theta\in \mathfrak{X}(Z)$ again denote the infinitesimal generator of the principal $S^1$-action on $Z$. Then
        \begin{align*}
            \iota_{\partial_\theta} \omega = (1+\lambda r)\iota_{\partial_\theta}(\pi^*\omega_B) + \iota_{\partial_\theta}(\dd r\wedge \alpha) =-\dd r,
        \end{align*}
        using that $\pi_*(\partial_\theta) = 0$, $\alpha(\partial_\theta)=1$ and $\dd r(\partial_\theta)=0$. 
        \item Since $\pi\circ i_c:r^{-1}(c)\to B$ is constant on the $S^1$-orbits, there exists a unique smooth map $\chi_c:M_c\to B$ making the diagram
    \begin{equation*}
        \begin{tikzcd}
            r^{-1}(c) \arrow[r, "i_c", hook] \arrow[d, "/S^1","\pi_c"', two heads] & {(M,\omega)} \arrow[d, "\pi", two heads] \\
            {(M_c,\omega_c)} \arrow[r, "\chi_c"]                              & {(B,\omega_B)}                               
        \end{tikzcd}
    \end{equation*}
    commute. The map $i_c$ is a diffeomorphism from $r^{-1}(c)$ to $Z \times \{c\} \subset Z \times I$, and since the $S^1$-action generated by $r$ on $r^{-1}(c)$ coincides with the principal bundle action on $Z$, we deduce that the map $\chi_c$ induced on the respective quotients is a diffeomorphism. 

    Furthermore, we compute
    \begin{align*}
        \pi_c^*\omega_c &= i_c^*((1+\lambda r)\pi^*\omega_B+\dd r\wedge\alpha) \\
        &= (1+\lambda c)(\pi\circ i_c)^*\omega_B \\
        &= (1+\lambda c)(\chi_c \circ \pi_c)^*\omega_B \\
        &= \pi_c^*\left( (1+\lambda c)\chi_c^*\omega_B\right)
    \end{align*}
    and since $\pi_c$ is a surjective submersion, the claim follows. \qedhere
    \end{enumerate}
\end{proof}

\begin{example}
      If $\lambda=0$, $\omega$ is symplectic for all of $I=\IR$ and the Duistermaat--Heckman model is a Hamiltonian $S^1$-space isomorphic to the product $B\times T^*S^1$, equipped with the product symplectic form and the standard Hamiltonian $S^1$-action on the second factor.
\end{example}
\begin{proposition}[Duistermaat--Heckman Normal Form] \label{prop:DHNormalForm}
    Let $(M,\omega,H)$ be a symplectic manifold equipped with a Hamiltonian $S^1$-action generated by $H:M\to \IR$ such that the system admits a symplectic quotient at $0\in \IR$. Then there is a neighborhood of $H^{-1}(0)$ that is equivariantly symplectomorphic to a neighborhood of $Z\times \{0\}$ in the Duistermaat--Heckman model. 
\end{proposition}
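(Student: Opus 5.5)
We prove the Duistermaat--Heckman normal form by the equivariant coisotropic embedding argument: first build a model of the neighborhood with the right linear data, then correct it by an equivariant Moser isotopy. Write $Z := H^{-1}(0)$, $p \colon Z \to B$ for the reduction bundle, and $i \colon Z \hookrightarrow M$ for the inclusion. Since the action is free on $Z$, the level is regular and $p$ is a principal $S^1$-bundle with $i^*\omega = p^*\omega_B$. Fix an $S^1$-invariant connection form $\alpha$ on $Z$; any connection form on a principal bundle is invariant for the principal action. Consider the model $(Z \times I, \omega_0 := \pi^*\omega_B + \dd(r\alpha))$ with $\pi \colon Z\times I \to B$ the projection. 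This is exactly the form of \cref{prop:DHmodel}, written as in its proof, but with the curvature $\dd\alpha = p^*F$ arbitrary. The nondegeneracy argument near $r=0$ and the computation $\iota_{\partial_\theta}\omega_0 = -\dd r$ go through unchanged. In the setting used later, the scaling condition lets us take $F = \lambda\omega_B$, which gives precisely the model of \cref{prop:DHmodel}.

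\textbf{Step 1: an equivariant tubular map.} We construct an $S^1$-equivariant diffeomorphism $\Phi_0$ from a neighborhood of $Z\times\{0\}$ onto a neighborhood of $Z$ in $M$. We require $\Phi_0(z,0) = z$ and $H\circ \Phi_0 = r$. To do this, choose an $S^1$-invariant Riemannian metric on $M$ by averaging. Let $V := \nabla H / \abs{\nabla H}^2$, which is defined near $Z$ because $Z$ is a regular level. It is invariant and satisfies $\dd H(V)=1$. Set $\Phi_0(z,r) := \mathrm{Fl}^V_r(z)$. Equivariance follows from the invariance of $V$, and $H\circ\Phi_0 = r$ follows from $\dd H(V)=1$.

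\textbf{Step 2: matching on $Z$.} Let $\omega_1 := \Phi_0^*\omega$. Both $\omega_0$ and $\omega_1$ are invariant and have the same moment map $r$: for $\omega_1$ this holds because $\Phi_0$ is equivariant and intertwines $H$ with $r$. Along $Z\times\{0\}$ their restrictions to $TZ$ agree, both being $p^*\omega_B$. They may still differ on pairs involving $\partial_r$. For $\omega_0$ we have $\omega_0(\partial_r,\cdot) = \alpha$ at $r=0$, while $\omega_1(\partial_r,\cdot)$ restricted to $TZ$ is some invariant 1-form $\beta$ with $\beta(\partial_\theta)=1$. The difference $\beta-\alpha$ vanishes on $\partial_\theta$ and is invariant, hence basic: $\beta - \alpha = p^*\gamma$ for some $\gamma \in \Omega^1(B)$. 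Two fixes are available. One can simply replace $\alpha$ by the connection $\beta$ and then restore the curvature requirement; this needs an extra argument, since the curvature changes by $\dd\gamma$. The cleaner fix is to modify $\Phi_0$ by composing with the time-$r$ flow of the horizontal lift of $\omega_B^{-1}$-dual to $\gamma$, which corrects the mixed term. Alternatively, we accept the mismatch and note that Moser only needs $\omega_1 - \omega_0$ to vanish on $Z$ and to be exact near $Z$. So the real requirement is that the two forms agree pointwise along $Z\times\{0\}$. Since both are determined on $T(Z\times I)|_{Z\times 0}$ by their values on $TZ\times TZ$ and by contraction with $\partial_r$, Step 2 reduces exactly to arranging $\beta=\alpha$.

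\textbf{Step 3: equivariant Moser.} Once $\omega_0 = \omega_1$ along $Z\times\{0\}$, set $\omega_t = (1-t)\omega_0 + t\omega_1$. These forms are symplectic near $Z$, invariant, and each has moment map $r$. By the relative Poincaré lemma, using homotopy along the $r$-direction, which is $S^1$-equivariant, we get $\omega_1-\omega_0 = \dd\sigma$ with $\sigma$ invariant and vanishing on $Z$. Moreover, $\iota_{\partial_\theta}(\omega_1-\omega_0) = 0$, and by the Cartan formula this yields $\iota_{\partial_\theta}\sigma$ constant, hence zero since it vanishes on $Z$. Define the Moser field $X_t$ by $\iota_{X_t}\omega_t = -\sigma$. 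It is invariant, vanishes on $Z$, and satisfies $\dd r(X_t) = \omega_t(\partial_\theta, X_t)\cdot(\pm1) = \pm\sigma(\partial_\theta)=0$. So its flow preserves $r$, commutes with the action, and is defined up to time $1$ on a smaller neighborhood. Composing $\Phi_0$ with the time-1 map gives the desired equivariant symplectomorphism, which also intertwines $H$ and $r$.

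I expect the main obstacle to be Step 2, namely getting the linear data along $Z$ to match while keeping the model's connection with curvature $\lambda\omega_B$. The plan is to take $\alpha := \beta$ for the normal form itself, which is always possible, and to invoke the scaling hypothesis only afterwards when identifying with \cref{prop:DHmodel}.
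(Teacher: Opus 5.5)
This is the same route as the paper, which simply invokes the (equivariant) coisotropic embedding theorem. Your invariant normal flow $\Phi_0$ followed by an equivariant, $r$-preserving Moser isotopy is exactly the standard proof of that theorem, and your Step 3 checks ($\sigma$ invariant, $\sigma(\partial_\theta)=0$, hence $\dd r(X_t)=0$) are correct.

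The one thing to tighten is Step 2, where you should commit to the shear $\Psi(z,r)=(\mathrm{Fl}^{Y_{\mathrm{hor}}}_r(z),r)$ with $\iota_Y\omega_B=-\gamma$. This map is equivariant, preserves $r$, and is the identity on $Z\times\{0\}$. There it gives $(\Psi^*\omega_1)(\partial_r,v)=\beta(v)+\omega_B(Y,p_*v)=\alpha(v)$, so it works directly for a connection $\alpha$ with curvature $\lambda\omega_B$. By contrast, your stated final plan of taking $\alpha:=\beta$ and invoking the scaling hypothesis afterwards does not by itself identify the $\beta$-model with the model of \cref{prop:DHmodel}. That identification needs precisely this shear, plus Moser, again.
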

\begin{proof}
    This is an application of the coisotropic embedding theorem. We refer to \cite[Section 2.1]{guillemin2012moment} for details.
\end{proof}
\begin{remark}
    It is not hard to see that the neighborhood of $H^{-1}(0)$ can be extended as long as no singular value is crossed. 
\end{remark}

Before proceeding to the main construction, we briefly discuss the assumption that the curvature is proportional to the symplectic form. The following short argument shows that, whenever the cohomology class of the symplectic form is a multiple of the Chern class $c_1(Z)\in H^2(B;\IZ)$, a connection form with this propery exists:
\begin{lemma}[Scaling Condition]\label{lem:prequantization}
    Let $(B,\omega_B)$ be a symplectic manifold and $Z\to B$ a principal $S^1$-bundle with Chern class $c_1(Z)\in H^2(B;\IZ).$ Assume that $$\lambda [\omega_B] = c_1(Z) \qq{for some} \lambda \in \IR.$$ Then there is a connection form $\alpha\in \Omega^1(Z)$ whose curvature $F\in \Omega^2(B)$ satisfies $$F=\lambda \omega_B.$$
\end{lemma}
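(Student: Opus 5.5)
The plan is to begin with an arbitrary connection on $Z$ and then adjust it by adding the pullback of a one-form from $B$. Because $S^1$ is abelian, connection forms on $Z$ form an affine space modelled on $\Omega^1(B)$. So we pick any connection form $\alpha_0\in\Omega^1(Z)$, for example by averaging over a partition of unity subordinate to a trivializing cover. Its curvature $F_0\in\Omega^2(B)$ is the unique closed form with $\dd\alpha_0=\pi^*F_0$.

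By Chern--Weil theory, $[F_0]$ represents the image of $c_1(Z)$ in $H^2(B;\IR)$. This holds under the paper's normalization, in which $\alpha_0(\partial_\theta)=1$ and $H^2$ is taken with the matching convention; with other conventions the identity carries a factor $\pm 1/2\pi$, which can be absorbed into $\lambda$ or the sign choices. Combining this with the hypothesis $\lambda[\omega_B]=c_1(Z)$, we get $[F_0-\lambda\omega_B]=0$ in $H^2_{\dR}(B)$. Hence there is a one-form $\beta\in\Omega^1(B)$ with
\begin{equation*}
    \lambda\omega_B - F_0 = \dd\beta .
\end{equation*}

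We then set $\alpha:=\alpha_0+\pi^*\beta$. This is again a connection form. It is $S^1$-invariant because $\pi^*\beta$ is basic, and it satisfies $\alpha(\partial_\theta)=\alpha_0(\partial_\theta)+\beta(\pi_*\partial_\theta)=1$. Its differential is $\dd\alpha=\pi^*(F_0+\dd\beta)=\pi^*(\lambda\omega_B)$. Since $\pi$ is a surjective submersion, $\pi^*$ is injective, so the curvature of $\alpha$ is exactly $F=\lambda\omega_B$.

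The only step that is not purely formal is the Chern--Weil identification of $[F_0]$ with the real image of $c_1(Z)$, together with keeping the normalization constant consistent with the convention used in the scaling condition \eqref{eq:scaling}. I will cite this as standard. I will also note that torsion in $H^2(B;\IZ)$ does not matter, because only the real class enters the argument.
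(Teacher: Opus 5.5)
Your proof is correct and is essentially the paper's argument: you take an arbitrary connection $\alpha_0$, use Chern--Weil to write $\lambda\omega_B - F_0 = \dd\beta$, and correct it to $\alpha = \alpha_0 + \pi^*\beta$. Your extra checks (that $\alpha$ is still a connection form and that $\pi^*$ is injective) and your remark on normalization conventions are sound additions that the paper leaves implicit.
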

\begin{proof}
    Pick any connection 1-form $\alpha_0\in \Omega^1(Z)$ and recall that its curvature is defined by $\dd \alpha_0 = \pi^*F_0$. By Chern--Weil theory we have 
    \begin{equation*}
        [F_0]= c_1(Z) = \lambda [\omega_B]
    \end{equation*} and hence there is $\beta\in \Omega^1(B)$ such that $\lambda \omega_B = F_0 +\dd \beta.$ Then 
    \begin{equation*}
        \alpha:= \alpha_0 +\pi^* \beta
    \end{equation*}
    is a connection 1-form such that
    \begin{align*}
        \dd \alpha &= \dd \alpha_0 + \pi^* \dd \beta = \pi^*(F_0 + \dd \beta) = \pi^*(\lambda \omega_B). \qedhere
    \end{align*}
\end{proof}
\begin{remark}
    In the application to Hirzebruch surfaces, all quotients are diffeomorphic to a sphere i.e. $B\cong S^2$. Since $H^2(S^2;\IR)\cong \IR$ is one-dimensional, the hypothesis of \cref{lem:prequantization} is automatically satisfied in this case. 
\end{remark}

\subsection{Proof of \cref{thm:mainC}}
Before starting with the proof, let us give an overview. First, note that, in view of \cref{prop:DHNormalForm}, we can assume without loss of generality that we are working with the Duistermaat--Heckman model $(M,\omega)$ over a base manifold $(B,\omega_B)$. As in \cref{prop:DHmodel}, we denote by $\pi \colon M \rightarrow B$ the natural projection obtained from the symplectic quotient map. To construct the spread of a path of Hamiltonian diffeomorphisms $\{\varphi^c:B\to B\}$, we first apply \cref{prop:reparametrisation} to get a family of Hamiltonians $\{f^c:B\to \IR\}$ whose time-one maps corresponds precisely to the path $\{\varphi^c\}$. We then lift the path $\{f^c\}$ to a single Hamiltonian $F:M\to \IR$ on the total space $M$ and show that the residual Hamiltonians on each reduced space $M_c$ correspond to $f^c$. Hence, its time-one map corresponds to $\varphi^c$. 

The essence of the spread construction for Hamiltonian diffeomorphisms is thus a spread construction for Hamiltonians on the base space of a Duistermaat-Heckman model. So let $(M,\omega)$ be the Duistermaat--Heckman model over the base space $(B,\omega_B)$ as defined in \cref{prop:DHmodel}. In this scenario, the spread is defined as follows: %Recall that the latter is the product $M=Z\times I$ of a principal bundle $\pi:Z\to B$ and an open interval $I\subset \IR$. Recall also that we assume that $Z$ is equipped with a connection on $Z$ such that its curvature form $F\in \Omega^2(B)$ is proportional to $\omega_B$ i.e. there is $\lambda\in \IR$ such that $F=\lambda \omega_B$.
\begin{definition}
     The \textit{spread Hamiltonian} of a smooth path of Hamiltonians $\{f^c:B\to \IR\}_{c\in I}$ is the Hamiltonian defined by
    \begin{align*}
        F:M=Z\times I &\to \IR \\
        (p,c) &\mapsto (1+\lambda c) \cdot f^c(\pi(p,c)),
    \end{align*}
    where $\pi:M\to B$ is the projection of the Duistermaat--Heckman model and $\lambda\in \IR$ is the proportionality factor relating curvature and symplectic form, i.e., $F=\lambda \omega_B$.
\end{definition}

\begin{remark}
    The factor $(1+\lambda c)$ is dictated by the linear variation of the reduced symplectic forms. It compensates exactly for the scaling of $\omega_c$ (see \eqref{eq:scaling}), ensuring that the residual Hamiltonian at level $c$ corresponds to $f^c$ under the identification $M_c \cong B$.
\end{remark}

Recall from \cref{prop:DHmodel} that there is a diffeomorphism $\chi_c:M_c\to B$ such that the diagram 
\begin{equation*}
        \begin{tikzcd}
            r^{-1}(c) \arrow[r, "i_c", hook] \arrow[d, "/S^1","\pi_c"', two heads] & {(M,\omega)} \arrow[d, "\pi", two heads] \\
            {(M_c,\omega_c)} \arrow[r, "\chi_c"]                              & {(B,\omega_B)}                               
        \end{tikzcd}
    \end{equation*} commutes and which satisfies $$\omega_c = (1+\lambda c)\chi_c^*(\omega_B).$$ Recall further from \cref{sec:isotopiesandreduction} that, whenever $\{F,r\}=0$, the Hamiltonian $F$ descends to a residual Hamiltonian $F_\resc:M_c\to \IR$ satisfying 
    \begin{equation}
        i_c^*F = \pi_c^*F_\resc. \label{eq:residualham}
    \end{equation}
\begin{proposition}[Spreading a Path of Hamiltonians] \label{prop:spreadinghamwithDH}
    Let $\{f^c:B \to \IR\}_{c\in I}$ be a smooth path of Hamiltonians and let $F:M \to \IR$ be its spread. Then
    \begin{enumerate}
        \item $\{r,F\} =0$ and
        \item for all $c\in I$, the residual Hamiltonian $F_\resc:M_c \to \IR$ satisfies
        \begin{equation*}
           F_\resc = (1+\lambda c)\chi_c^*(f^c).
        \end{equation*}
    \end{enumerate}
\end{proposition}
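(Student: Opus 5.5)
Both claims follow from the structure of the Duistermaat--Heckman model in \cref{prop:DHmodel}. The key observation is that the spread Hamiltonian $F(p,c) = (1+\lambda c)\, f^c(\pi(p,c))$ depends only on the $r$-coordinate and on the projection $\pi$ to $B$. Both of these are invariant under the principal $S^1$-action on $Z$.

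For (1), we use that the Hamiltonian vector field of $r$ is (up to sign) the infinitesimal generator $\partial_\theta$ of the $S^1$-action. This was shown in the proof of \cref{prop:DHmodel}(2), where $\iota_{\partial_\theta}\omega = -\dd r$. Hence
\[
\{r,F\} = \pm\, \dd F(\partial_\theta).
\]
Since $\partial_\theta$ is tangent to $Z$, we have $\dd r(\partial_\theta)=0$ and $\pi_*\partial_\theta = 0$. Writing $F = G(\pi(\cdot), r(\cdot))$ with $G(b,c) = (1+\lambda c) f^c(b)$, which is smooth because the path $\{f^c\}$ is smooth in $c$, the chain rule gives
\[
\dd F(\partial_\theta) = \partial_b G\bigl(\pi_*\partial_\theta\bigr) + \partial_c G \cdot \dd r(\partial_\theta) = 0.
\]
So $F$ Poisson-commutes with $r$. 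The only care needed is sign conventions, which play no role since the result is zero. If $f^c$ is time-dependent, the same argument applies pointwise in $t$.

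For (2), (1) already guarantees that $F_\resc$ is well defined via \eqref{eq:residualham}. We compute $i_c^*F$ directly. On $r^{-1}(c) = Z\times\{c\}$ we have
\[
i_c^*F = (1+\lambda c)\,(\pi\circ i_c)^* f^c = (1+\lambda c)\,(\chi_c\circ\pi_c)^* f^c = \pi_c^*\bigl((1+\lambda c)\chi_c^* f^c\bigr),
\]
using the commutative diagram $\chi_c\circ\pi_c = \pi\circ i_c$ from \cref{prop:DHmodel}(3). Since $\pi_c$ is a surjective submersion, $\pi_c^*$ is injective on functions. Comparing with $i_c^*F = \pi_c^*F_\resc$ therefore yields $F_\resc = (1+\lambda c)\chi_c^*(f^c)$.

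There is no substantial obstacle. The only point deserving attention is justifying that $F$ is a legitimate Hamiltonian, meaning smooth and, if required, of compact support on the relevant region. Smoothness follows from the smooth dependence of $f^c$ on $c$. Compact support can be arranged by restricting $c$ to a compact subinterval or by using a cutoff in $r$, which does not affect the statements for the levels considered.
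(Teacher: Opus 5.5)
Your proposal is correct and follows the paper's proof essentially verbatim. For (1) the paper simply notes that $F$ is constant on the $S^1$-orbits; you make this precise by factoring $F$ through $(\pi,r)$, which is slightly more accurate than the paper's phrase ``fibers of $\pi$''. Part (2) is the same pullback computation, using $\chi_c\circ\pi_c=\pi\circ i_c$ and the injectivity of $\pi_c^*$.
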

\begin{proof}
    \begin{enumerate}
        \item Follows from $F$ being constant on the $S^1$-orbits, which are precisely the fibers of $\pi.$
        \item  By \eqref{eq:residualham}, the definition of the spread $F$ and using that $\pi \circ i_c = \chi_c \circ \pi_c$ we compute
        \begin{align*}
            \pi_c^* F_\resc &= i_c^*F \\
            &= (1+\lambda c) \cdot f^c\circ \pi\circ i_c \\
            &= (1+\lambda c)\cdot f^c\circ \chi_c \circ \pi_c\\
            &= \pi_c^*\left((1+\lambda c) \cdot\chi_c^*f^c \right).
        \end{align*}
        The result follows since $\pi_c^*$ is injective. \qedhere
    \end{enumerate}
\end{proof}

\begin{proof}[Proof of \cref{thm:mainC}]
    By \cref{prop:DHNormalForm}, we can assume without loss of generality that $M$ is the Duistermaat--Heckman model with base manifold $(B,\omega_B)$,  $I=(a,b)$ and $r=H$. By \cref{prop:reparametrisation}, there exists a smooth path of Hamiltonians $\{f^c:B\to \IR\}_{c\in I}$ such that 
    \begin{equation*}
        \varphi^c = \psi_1^{f^c}.
    \end{equation*}
    Let then $F:M \to \IR$ be the spread of this path and note that $\{r,F\}=0$ by \cref{prop:spreadinghamwithDH}. Hence, for all $c\in I$, there is a well-defined residual Hamiltonian $F_\resc:M_c\to \IR$ and, by the main property of the residual Hamiltonian (\cref{prop:residualisotopy}), there is a commutative diagram
    \begin{equation*}
        \begin{tikzcd}
            r^{-1}(c) \arrow[r, "\psi^F_t"] \arrow[d, "\pi_c", two heads] & r^{-1}(c) \arrow[d, "\pi_c", two heads] \\
            M_c \arrow[r, "\psi^{F_\resc}_t"]                             & M_c.                  \end{tikzcd}
    \end{equation*}
    Since $\chi_c:M_c\to B$ is only a rescaling (\cref{lem:rescalediffeo}) and since the Hamiltonians and symplectic forms have the same scaling factor $\frac{1}{1+\lambda c}$ we get the commutative diagram
    \begin{equation*}
        \begin{tikzcd}
M_c \arrow[d, "\chi_c"] \arrow[r, "\psi^{F_\resc}_t"] & M_c \arrow[d, "\chi_c"] \\
B \arrow[r, "\psi^{f^c}_t"]                           & B.                      
\end{tikzcd}
    \end{equation*}
    \cref{thm:mainC} follows by putting the diagrams together and evaluating at $t=1$.
\end{proof}

\subsection{The Spread and Symplectic Cuts} \label{ssec:cutspread}
In the same setup as the previous section, we now \emph{compactify} the space $M = Z \times I$ by performing two symplectic cuts, and show that the spread extends to that set-up. This is not surprising: As discussed in \cref{ssec:cutting}, the symplectic cut operation replaces a level set by its reduced space and the Hamiltonian we spread is lifted from the very same reduced space. 

Let $a_0, b_0 \in I$ with $a_0 < b_0$ and let $(M_\cut,\omega_\cut,r_\cut)$ be the symplectic manifold obtained from $(M, \omega,r)$ by two cuts at $a_0,b_0\in I$ with respect to $r$. Recall from \cref{ssec:cutting} (where this is discussed for a single symplectic cut) that there is a natural symplectomorphism
\begin{equation*}
    \iota_0 \colon \underbrace{r^{-1}(a_0,b_0)}_{\subset M} \rightarrow \underbrace{r_\cut^{-1}(a_0,b_0)}_{\subset M_\cut}, \quad
    \iota_0^* r_\cut = r.
\end{equation*} 
The latter equation tells us that $\iota_0$ is $S^1$-equivariant. Furthermore, there are two symplectomorphisms 
\begin{align*}
    &\iota_1 \colon M_{a_0} \rightarrow r^{-1}_\cut(a_0) \subset M_{\cut}, \\
    &\iota_2 \colon M_{b_0} \rightarrow r^{-1}_\cut(b_0) \subset M_{\cut}.
\end{align*}
We think of $M_\cut$ as decomposing as
\begin{equation*}
    M_\cut 
    = r_\cut^{-1}(a_0) \sqcup r_\cut^{-1}(a_0,b_0) \sqcup r_\cut^{-1}(b_0)
    \cong M_{a_0} \sqcup r^{-1}(a_0,b_0) \sqcup M_{b_0},
\end{equation*}
where the middle space has codimension $0$ and the other two constitute the \emph{cut locus}, which consists of two disjoint codimension two symplectic submanifolds. 

\begin{proposition}
    For every $c \in [a_0,b_0]$, there is a map $\chi_c \colon M_c \rightarrow B$ satisfying
    \begin{equation*}
        \omega_c = (1+\lambda c)\chi_c^* \omega_B
    \end{equation*}
    and making the diagram
    \begin{equation*}
    \begin{tikzcd}
        r_\cut^{-1}(c) \arrow[d] \arrow[r, "i_c", hook] & {(M_\cut,\omega_\cut)} \arrow[d, "\pi_\cut"] \\
        {(M_c,\omega_c)} \arrow[r, "\chi_c"]                                & {(B,\omega_B)}                                
    \end{tikzcd}
    \end{equation*}
    commute. Here, 
    \begin{enumerate}
        \item the vertical map on the right in the diagram is induced on $M_\cut$ by the natural projection $\pi \colon M = Z \times I \rightarrow B$;
        \item in case $c \in (a_0,b_0)$, the vertical map on the left in the diagram is the projection from symplectic reduction;
        \item in case $c \in \{a_0,b_0\}$, the vertical map on the left in the diagram is one of the symplectomorphisms $\iota_1^{-1}, \iota_2^{-1}$. 
    \end{enumerate}
\end{proposition}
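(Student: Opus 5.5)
We first construct $\pi_\cut$, then treat the interior levels and the two cut levels separately. Throughout, $M_\cut$ is the quotient of the auxiliary space $\{(p,z_1,z_2)\}$ by the two-torus action. The relevant moment maps are
\[
r(p)-\pi|z_1|^2-a_0 \qquad\text{and}\qquad b_0-r(p)-\pi|z_2|^2,
\]
restricted to the common zero level. We may use the iterated single cut of \cref{def:sympcut} equivalently.

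\textbf{Step 1: the map $\pi_\cut$.} Consider the map $(p,z_1,z_2)\mapsto \pi(p)$ on the zero level. It is invariant under both circle actions, since these act on $p$ only through the $S^1$-action on $Z$, which preserves the fibres of $\pi$. It therefore descends to a smooth map $\pi_\cut\colon M_\cut\to B$. The map is smooth because the quotient map is a submersion. By construction, $\pi_\cut\circ\iota_0=\pi$ on $r^{-1}(a_0,b_0)$.

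\textbf{Step 2: interior levels $c\in(a_0,b_0)$.} Here $\iota_0$ is an $S^1$-equivariant symplectomorphism satisfying $\iota_0^*r_\cut=r$. Hence it identifies $r_\cut^{-1}(c)$ with $r^{-1}(c)$, and the reduced space of $M_\cut$ at $c$ with $M_c$. The commutativity of the square and the formula $\omega_c=(1+\lambda c)\chi_c^*\omega_B$ then follow directly from \cref{prop:DHmodel}(3), using the same $\chi_c$ as there together with $\pi_\cut\circ\iota_0=\pi$.

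\textbf{Step 3: boundary levels $c=a_0$ (the case $b_0$ is symmetric).} Since $a_0\in I$, the reduced space $M_{a_0}$ and the map $\chi_{a_0}$ already exist by \cref{prop:DHmodel}. The two points to check are the following.
\begin{itemize}
\item[(i)] The set $r_\cut^{-1}(a_0)$ equals $\iota_1(M_{a_0})$.
\item[(ii)] We have $\pi_\cut\circ\iota_1=\chi_{a_0}$.
\end{itemize}
For (i): on the zero level, $r_\cut=a_0$ forces $z_1=0$ and $r(p)=a_0$. The map $\iota_1$ is exactly the map $[p]\mapsto[(p,0,z_2)]$, with $|z_2|$ determined and its phase absorbed by the second circle. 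For (ii): $\pi_\cut\circ\iota_1([p])=\pi(p)=\chi_{a_0}(\pi_{a_0}(p))$ by the commuting square of \cref{prop:DHmodel}(3). Setting the left vertical map to be $\iota_1^{-1}$ therefore makes the diagram commute. The symplectic form statement is the one already proved in \cref{prop:DHmodel} at level $a_0$.

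\textbf{Main obstacle.} The only delicate point is bookkeeping in Step 3. One must verify that the explicit description of $\iota_1$ coming from the cut construction agrees with the quotient description above. In particular, dividing by the extra circle acting on $z_1$ must identify $\{z_1=0,\ r=a_0\}$ modulo both circles with $r^{-1}(a_0)/S^1$. To keep this clean, I would perform the two cuts one at a time, each via \cref{def:sympcut}. The second cut happens away from the first cut locus, so each boundary level sees only a single cut, and the single-cut description of $\iota_1$ from \cref{ssec:cutting} applies verbatim.
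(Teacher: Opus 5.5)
Your proposal is correct and follows the same route as the paper. You descend $\pi$ to $\pi_\cut$, use the $S^1$-equivariant symplectomorphism $\iota_0$ to reduce the interior levels to \cref{prop:DHmodel}, and at the cut levels use $\iota_1,\iota_2$ together with the maps $\chi_{a_0},\chi_{b_0}$ of the uncut model. Your explicit checks that $\iota_1$ maps onto $r_\cut^{-1}(a_0)$ and that $\pi_\cut\circ\iota_1=\chi_{a_0}$ just spell out what the paper's proof leaves implicit.
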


\begin{proof}
    Recall that the map $\pi \colon M = Z \times I\to B$ is the composition of the natural projection to the $Z$-component and the projection of $Z$ to the base $B$ of the bundle. It is by construction invariant under the $S^1$-action generated by $r$ and thus descends to a map $\pi_\cut$ on the cut space. This can be checked using the definition of the symplectic cut. 

    For every $c \in (a_0,b_0)$, the $S^1$-equivariant symplectomorphism $\iota_0$ identifies the level set $r^{-1}(c) \subset M$ with $r_\cut^{-1}(c) \subset M_\cut$. This proves that there is a natural identification $M_{\cut,c} = M_c$ and that the proof of the claim follows from \cref{prop:DHmodel}. 

    Now let $c = a_0$ (the case $c=b_0$ works the same) and let $\chi_{a_0} \colon M_{a_0} \rightarrow B$ be the map from \cref{prop:DHmodel} of the reduced space of $M$ (not of $M_\cut$) to $B$. It satisfies $\omega_{a_0}=(1+\lambda a_0)\chi_{a_0}^* \omega_B$. Recall that there is a natural symplectomorphism $\iota_1 \colon M_{a_0} \rightarrow r_\cut^{-1}(c)$. This proves the claim in the case of $c = a_0$. 
\end{proof}

We are now ready to define the spread in case of the cut space. 

\begin{definition}
    Let $\{ f^c:B\to \IR\}_{c\in [a_0,b_0]}$ be a smooth path of Hamiltonians and let $M_\cut$ be as above. The \textit{spread} of this path to $M_\cut$ is the cut Hamiltonian $F:M_\cut \to \IR$, obtained from the spread of any smooth extension of the family $\{f^c\}$ to all of $M = Z \times I$.
\end{definition}

Since the symplectic cut removes the regions $r<a_0$ and $r>b_0$, the cut
Hamiltonian (\cref{def:cuthamiltonian}) is independent of the chosen extension of the
family $\{f^c\}_{c\in[a_0,b_0]}$. The results of the preceding section still hold true in this setting:
\begin{corollary}[Spreading Path of Hamiltonians]
     Let $\{ f^c:B\to \IR\}_{c\in [a_0,b_0]}$ be a smooth path of Hamiltonians and let $F:M_\cut \to \IR$ be its spread. Then
     \begin{enumerate}
         \item $\{r_\cut,F_\cut\}=0 $ and 
         \item for all $c\in [a_0,b_0]$, the residual Hamiltonians $F_\resc:M_c\to \IR$ satisfy
         \begin{equation*}
             F_\resc = (1+\lambda c)\chi_c^*(f^c).
         \end{equation*}
     \end{enumerate}
\end{corollary}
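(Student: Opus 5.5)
The plan is to reduce both claims to the uncut statement, \cref{prop:spreadinghamwithDH}, using the decomposition of the cut space
\[
M_\cut \cong M_{a_0}\sqcup r^{-1}(a_0,b_0)\sqcup M_{b_0}
\]
and the defining properties of cut Hamiltonians from \cref{ssec:cutting}. First I fix a smooth extension of $\{f^c\}_{c\in[a_0,b_0]}$ to a path $\{f^c\}_{c\in I}$. Let $\tilde F\colon M\to\IR$ be its spread in the Duistermaat--Heckman model. By \cref{prop:spreadinghamwithDH}(1) we have $\{r,\tilde F\}=0$, so the cut Hamiltonian $F=\tilde F_\cut$ is well defined.

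For claim (1), I work on the auxiliary space $M\times\C$ (twice, once for each cut, or with $M\times\C^2$ carrying the $T^2$-action for the double cut). The lifts satisfy $\{r_\times,\tilde F_\times\}=0$, because $\tilde F_\times$ depends only on $p$ and Poisson-commutes with $r$. The lifts of $r$ and of $\tilde F$ to $M\times\C$ (depending only on $p$) also Poisson-commute. Poisson brackets of invariant functions descend to the reduced space, since reduced Hamiltonian vector fields are the projections of the lifted ones restricted to the level set. Hence $\{r_\cut,F\}=0$ on $M_\cut$.

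An alternative argument for (1), which I would use as a cross-check, goes by density. On the open dense subset $\iota_0(r^{-1}(a_0,b_0))$ the bracket equals $\{r,\tilde F\}\circ\iota_0^{-1}=0$, because $\iota_0$ is a symplectomorphism with $\iota_0^*r_\cut=r$ and $\iota_0^*F=\tilde F$. By continuity the bracket then vanishes everywhere.

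For claim (2) there are two cases, matching the preceding proposition.
\begin{itemize}
\item If $c\in(a_0,b_0)$, the $S^1$-equivariant symplectomorphism $\iota_0$ identifies $r_\cut^{-1}(c)$ with $r^{-1}(c)$, and it pulls $F$ back to $\tilde F$. So the residual Hamiltonian of $F$ at level $c$ coincides with that of $\tilde F$, and \cref{prop:spreadinghamwithDH}(2) gives $F_\resc=(1+\lambda c)\chi_c^*f^c$.
\item If $c=a_0$ (the case $b_0$ is identical), the level set $r_\cut^{-1}(a_0)$ is the cut locus. The residual Hamiltonian there should be read as $\iota_1^*F$ on $M_{a_0}$, which is consistent with the left vertical map being $\iota_1^{-1}$. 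The cut-Hamiltonian identity $F\circ\iota_1=\tilde F_{\textup{res},a_0}$ then gives $\iota_1^*F=(1+\lambda a_0)\chi_{a_0}^*f^{a_0}$, again by \cref{prop:spreadinghamwithDH}(2) applied in $M$.
\end{itemize}
Independence of the chosen extension follows because only values of $f^c$ with $c\in[a_0,b_0]$ enter these formulas.

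I expect the main obstacle to be in (1): making rigorous that brackets of $S^1$-invariant functions descend through the cut quotient at the cut locus, where $\iota_0$ gives no information. I would handle this with the density argument above, as it avoids the auxiliary-space computation entirely. The endpoint case of (2) is otherwise just bookkeeping of the conventions from the preceding proposition.
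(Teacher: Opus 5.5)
Your proposal is correct and is essentially the paper's argument. The paper's proof just says to combine \cref{prop:spreadinghamwithDH} with \cref{prop:cuttingham}, that is, to transport the uncut statement through $\iota_0$ on interior levels and through $\iota_1,\iota_2$ on the cut locus, which is exactly what you do; your density argument for (1) and your reading of the residual Hamiltonian on the cut locus as $\iota_1^*F$ fill in details the paper leaves implicit.
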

\begin{proof}
    Combine \cref{prop:spreadinghamwithDH} and \cref{prop:cuttingham}.
\end{proof}

\begin{corollary}
    \label{cor:spread_cut}
    Let $\{\varphi^c:B\to B\}_{c\in[a_0,b_0]}$ be a smooth path of Hamiltonian diffeomorphisms. There exists a Hamiltonian $F:M_\cut \to \IR$ such that
    \begin{enumerate}
        \item $\{r_\cut,F\}=0$ and
        \item for all $c \in [a_0,b_0]$ the following diagram commutes:
        \begin{equation*}
            \begin{tikzcd}
r_\cut^{-1}(c) \arrow[r, "\psi^F_1 \vert_{r_\cut^{-1}(c)}"] \arrow[d] & r_\cut^{-1}(c) \arrow[d] \\
M_c \arrow[r, "\psi^{F_\resc}_1"] \arrow[d, "\chi_c"]              & M_c \arrow[d, "\chi_c"]                      \\
B \arrow[r, "\varphi^c"]                                           & B                                           
\end{tikzcd}
        \end{equation*}
    \end{enumerate}
\end{corollary}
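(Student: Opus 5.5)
The plan is to mimic the proof of \cref{thm:mainC}, now on the cut space. First, apply \cref{prop:reparametrisation} to the path $\{\varphi^c\}_{c\in[a_0,b_0]}$, after an affine reparametrisation of $[a_0,b_0]$ to $[0,1]$. This produces a smooth family of Hamiltonians $\{f^c\colon B\times[0,1]\to\IR\}_{c\in[a_0,b_0]}$ with $\psi^{f^c}_1=\varphi^c$. Extend this family smoothly to $c\in I$; for instance, extend $\{\varphi^c\}$ slightly beyond the endpoints (or reparametrise via a smooth map $I\to[a_0,b_0]$ that is the identity on $[a_0,b_0]$) before applying \cref{prop:reparametrisation}. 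Then let $F\colon M_\cut\to\IR$ be the spread of this path to $M_\cut$, that is, the cut Hamiltonian of the spread Hamiltonian on $M=Z\times I$. Time dependence causes no problem: every earlier construction is performed for each $t$ separately.

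Item (1) follows directly from the preceding corollary, since $\{r_\cut,F\}=0$. The same corollary gives $F_\resc=(1+\lambda c)\chi_c^*(f^c)$ for every $c\in[a_0,b_0]$, including the cut levels $c\in\{a_0,b_0\}$. At those levels the relevant maps are $\iota_1^{-1}$ and $\iota_2^{-1}$, and $F_\resc$ means the restriction of $F$ to the cut locus.

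For item (2), treat the two squares of the diagram separately. The lower square follows from \cref{lem:rescalediffeo} applied to $\chi_c\colon(M_c,\omega_c)\to(B,\omega_B)$ with $\nu=1/(1+\lambda c)$. The hypotheses $\omega_c=(1+\lambda c)\chi_c^*\omega_B$ and $F_\resc=(1+\lambda c)\chi_c^* f^c$ (both from the proposition preceding the corollary) give $\chi_c\circ\psi^{F_\resc}_t=\psi^{f^c}_t\circ\chi_c$, and evaluating at $t=1$ yields $\varphi^c$. Note that $1+\lambda c>0$ on $I$, since we take $I$ to be the connected interval of regular values containing $0$. For the upper square, split into two cases. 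For $c\in(a_0,b_0)$, the equivariant symplectomorphism $\iota_0$ intertwines $\psi^F_t$ with the flow of the spread Hamiltonian on $M$ (\cref{prop:cuttingham}), and \cref{prop:residualisotopy} then gives $\pi_c\circ\psi^F_t=\psi^{F_\resc}_t\circ\pi_c$. For $c\in\{a_0,b_0\}$, the second identity in \cref{prop:cuttingham}, $\psi^{F_\cut}_t\circ\iota_1=\iota_1\circ\psi^{F_\resc}_t$, is exactly the statement that the upper square commutes with vertical map $\iota_1^{-1}$ (and similarly $\iota_2^{-1}$).

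The main subtlety is bookkeeping rather than any analytic step. One must check that the flow $\psi^F_1$ preserves each $r_\cut^{-1}(c)$, which follows from $\{r_\cut,F\}=0$. One must also check that at the cut levels the left vertical map is the inverse of the embedding $\iota_i$, rather than a quotient map, so that the identity from \cref{prop:cuttingham} is correctly oriented. A second point is compactness of support: if $B$ is not compact, the Hamiltonians $f^c$ should be compactly supported uniformly in $c$, and this is provided by \cref{prop:reparametrisation}.
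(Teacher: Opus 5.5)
Your proposal is correct and matches the argument the paper intends. The paper gives no separate proof: the corollary is the proof of \cref{thm:mainC} rerun on $M_\cut$. The preceding corollary supplies $\{r_\cut,F\}=0$ and $F_\resc=(1+\lambda c)\chi_c^*f^c$ at every level, \cref{lem:rescalediffeo} gives the lower square, and \cref{prop:residualisotopy} and \cref{prop:cuttingham} give the upper square in the interior and at the cut levels respectively.

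One small slip concerns your parenthetical alternative. A smooth map $I\to[a_0,b_0]$ restricting to the identity on $[a_0,b_0]$ cannot exist, because its derivative at $b_0$ would be $1$ and push it above $b_0$. So rely on your first option instead: for example, extend the generating Hamiltonian of $c\mapsto(\varphi^{a_0})^{-1}\circ\varphi^c$ smoothly past the endpoints and shrink $I$ accordingly, which does not change $M_\cut$.
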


\subsection{The Spread and Torus Actions}

In order to identify the symplectic manifold obtained from performing symplectic cuts on a DH-model (see \cref{ex:Hirzebruchs}), we study how to lift Hamiltonian torus actions on the base of $\pi \colon M \rightarrow B$. We focus on toric structures in particular.

Assume now that the base $(B,\omega_B)$ is equipped with a Hamiltonian action by a torus $G$, generated by a moment map $\mu_B:B\to \fg^*.$ In this subsection, we explain when and how this action lifts to a Hamiltonian action on the Duistermaat--Heckman model $(M,\omega)$. Recall that we build the latter from a principal $S^1$-bundle $Z\to B$ with Chern class $c_1(Z)\in H^2(B;\IZ)$ satisfying the scaling condition
\begin{equation}
    \label{eq:scaling2}
    \lambda [\omega_B] = c_1(Z) \qq{for some} \lambda \in \IR.
\end{equation}
This means that $Z$ is a prequantum circle bundle over $(B,\lambda\omega_B)$. We review the relevant conditions for a given action to lift to a prequantum bundle and refer to \cite[Chapter 6]{guillemin2002moment} for details. 
\begin{remark}
    Prequantisation is defined for manifolds $B$ equipped with a closed two--form, not necessarily non-degenerate. The case $\lambda=0$ does therefore not need to be treated separately.
\end{remark}
With regard to the form $\lambda \omega_B$, the moment map is $\lambda \mu_B$. The action then lifts to $Z$ precisely if $\lambda \mu_B$ sends all fixed points to the weight lattice $\fgz^*$ (see \cite[Example 6.10]{guillemin2002moment}). Actually, since $[\lambda\omega_B]$ is integral by the scaling condition, it suffices to assume that there is one fixed point that is mapped to the weight lattice. 
\begin{lemma} \label{lem:hamiltonianlift}
In the situation described above, the Duistermaat--Heckman model $(M,\omega)$ can be equipped with a Hamiltonian $G\times S^1$-action with moment map
$$\mu=((1+\lambda r)\pi^*\mu_B,r):M \to \fg^*\oplus \IR.$$
\end{lemma}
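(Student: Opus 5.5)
We will first lift the $G$-action from $B$ to $Z$ as a connection-preserving action, and then verify directly that the product action on $M=Z\times I$ is Hamiltonian with the stated moment map.

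\textbf{Step 1: lift to $Z$.} By the prequantisation result quoted before the lemma (\cite[Example 6.10]{guillemin2002moment}), the $G$-action on $(B,\lambda\omega_B)$ with moment map $\lambda\mu_B$ lifts to an action on $Z$. This lift commutes with the principal $S^1$-action and preserves the connection $\alpha$ of \cref{lem:prequantization}, whose curvature is $\lambda\omega_B$. Concretely, for $X\in\fg$ with fundamental vector field $X_B$ on $B$, the lifted fundamental vector field on $Z$ is
\begin{equation*}
X_Z = X_B^{\hor} + \lambda\,\pi^*\langle\mu_B,X\rangle\,\partial_\theta,
\end{equation*}
up to a sign fixed by the conventions $\iota_{X_B}\omega_B=-\dd\langle\mu_B,X\rangle$ and $\iota_{\partial_\theta}\omega=-\dd r$. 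This vector field satisfies $\mathcal{L}_{X_Z}\alpha=0$, which can be checked via Cartan's formula:
\begin{equation*}
\mathcal{L}_{X_Z}\alpha=\dd(\alpha(X_Z))+\iota_{X_Z}\pi^*(\lambda\omega_B)=\lambda\,\dd\langle\mu_B,X\rangle-\lambda\,\dd\langle\mu_B,X\rangle=0.
\end{equation*}
The integrality hypothesis, that some fixed point is sent into $\fgz^*$, is exactly what makes this infinitesimal action integrate to an action of the torus $G$ rather than of a cover. We then let $G$ act on $M=Z\times I$ by acting on the first factor and trivially on $r$. This action commutes with the $S^1$-action.

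\textbf{Step 2: moment map computation.} Since $G$ preserves $\alpha$, $r$ and $\pi$, it preserves $\omega=\pi^*\omega_B+\dd(r\alpha)$. We then compute the contraction directly:
\begin{equation*}
\iota_{X_Z}\omega=(1+\lambda r)\,\pi^*\iota_{X_B}\omega_B-\alpha(X_Z)\,\dd r.
\end{equation*}
Here we used $\dd r(X_Z)=0$, and that $\pi^*\omega_B$ only sees the horizontal part of $X_Z$, which projects to $X_B$. Substituting $\iota_{X_B}\omega_B=-\dd\langle\mu_B,X\rangle$ and $\alpha(X_Z)=\lambda\langle\mu_B,X\rangle$ gives
\begin{equation*}
\iota_{X_Z}\omega=-\bigl((1+\lambda r)\,\dd\pi^*\langle\mu_B,X\rangle+\lambda\,\pi^*\langle\mu_B,X\rangle\,\dd r\bigr)=-\dd\bigl((1+\lambda r)\,\pi^*\langle\mu_B,X\rangle\bigr).
\end{equation*}
So the $G$-component of the moment map is $(1+\lambda r)\pi^*\mu_B$. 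The $S^1$-component is $r$ by \cref{prop:DHmodel}(2).

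\textbf{Step 3: equivariance.} The two components are invariant under the full $G\times S^1$-action, since $\pi^*\mu_B$ is $G$-invariant ($G$ is abelian) and $S^1$-invariant, and $r$ is invariant under both. Hence the combined map is a moment map for the torus $G\times S^1$.

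The main obstacle is Step 1: getting the sign and normalisation conventions consistent between the prequantisation lift and the orientation of $\partial_\theta$ and $\alpha$, and justifying integrality of the lift. For integrality we rely on the cited reference. A sign mismatch would only change the lift by $\pm$ and does not affect the structure of the argument.
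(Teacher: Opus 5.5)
Your proposal is correct and follows essentially the same route as the paper. Both arguments lift the $G$-action to $Z$ via the cited prequantisation result, write the lifted generator as the horizontal lift of the base generator plus $\pm\lambda\,\pi^*\mu_B^X\,\partial_\theta$, and contract with $\omega=(1+\lambda r)\pi^*\omega_B+\dd r\wedge\alpha$ to obtain $\pm\dd\bigl((1+\lambda r)\pi^*\mu_B^X\bigr)$. Your extra checks ($\mathcal{L}_{X_Z}\alpha=0$ and invariance of both components) are fine. Your convention $\iota_X\omega=-\dd H$ is applied uniformly, whereas the paper's computation for the $G$-factor uses the opposite sign from the one in \cref{prop:DHmodel}(2).
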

\begin{proof}
    The $G$-action commutes with the principal $S^1$-action and hence $Z$ can indeed be equipped with a $G\times S^1$-action. Since we already know that the $S^1$-action is Hamiltonian with moment map $r$, we are only left to show that the $G$-action is Hamiltonian with moment map $(1+\lambda r)\pi^*\mu_B$. 
    
    The infinitesimal generator for $X\in \fg$ is (compare \cite[Example 6.10]{guillemin2002moment})
\begin{equation*}
    \Tilde{X} =X^\sharp_\textup{hor} -  \pi^*(\lambda \mu_B^X) \cdot \partial_\theta,
\end{equation*}
where $X^\sharp_\hor\in \mathfrak{X}(Z)$ is the horizontal lift of $X^\sharp\in \mathfrak{X}(B)$\footnote{Recall that this means that $X_\hor^\sharp\in \vf{Z}$ is the unique vector field on $Z$ such that \begin{enumerate}
    \item $\pi_*(X^\sharp_\hor)=X^\sharp$ and
    \item $\alpha(X^\sharp_\hor)=0.$
\end{enumerate}} and $\mu_B^X$ is the component of $\mu_B$ along $X$. The result then follows from the computation
\begin{align*}
    \iota_{\Tilde{X}} \omega &= \iota_{\Tilde{X}}\left( (1+\lambda r)\pi^*\omega+ \dd r\wedge \alpha\right) \\
    &= (1+\lambda r)\pi^*(\iota_{X^\sharp}\omega) - \alpha(\Tilde{X}) \dd r \\
    &= (1+ \lambda r) \pi^*(\dd \mu_B^X)  + \pi^*(\lambda \mu_B^X) \,\alpha(\partial_\theta) \dd r \\
    &= (1+\lambda r) \dd (\pi^* \mu_B^X) + \pi^*(\lambda \mu_B^X) \;\dd r \\
    &= \dd \left( (1+\lambda r)\pi^*\mu_B^X\right). \qedhere
\end{align*}
\end{proof}

\begin{corollary}
   In the situation described above, let $\{\varphi^c:B\to B \}_{c\in I}$ be a smooth path of Hamiltonian diffeomorphisms. There exists a Hamiltonian $F:M\to \IR$ such that
   \begin{enumerate}
       \item $\{r,F\} = 0$ and
       \item the following diagram commutes:
       \begin{equation*}
           \begin{tikzcd}
r^{-1}(c) \arrow[r, "\psi^F_1"] \arrow[d, "\pi_c", two heads]      & r^{-1}(c) \arrow[d, "\pi_c", two heads] \arrow[r, hook] & {(M,\omega)} \arrow[d, "(1+\lambda c)\pi^*\mu_B"]   \\
{(M_c,\omega_c)} \arrow[r, "\psi^{F_\resc}_1"] \arrow[d, "\chi_c"] & {(M_c,\omega_c)} \arrow[r, "\mu_c"] \arrow[d, "\chi_c"] & \fg^* \arrow[d, "\frac{1}{1+\lambda c}"] \\
{(B,\omega_B)} \arrow[r, "\varphi^c"]                              & {(B,\omega_B)} \arrow[r, "\mu_B"]                         & \fg^*                               
\end{tikzcd}
       \end{equation*}
   \end{enumerate}
\end{corollary}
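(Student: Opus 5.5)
The plan is to reuse the spread Hamiltonian from the proof of \cref{thm:mainC} unchanged and then add the right-hand column of the diagram, which concerns the lifted moment map of \cref{lem:hamiltonianlift}.

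First, apply \cref{prop:reparametrisation} to $\{\varphi^c\}_{c\in I}$. This gives a smooth family $\{f^c\}$ with $\psi_1^{f^c}=\varphi^c$. Let $F$ be its spread Hamiltonian, $F(p,c)=(1+\lambda c)f^c(\pi(p,c))$.

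By \cref{prop:spreadinghamwithDH}, $\{r,F\}=0$, which is property (1). The same proposition gives $F_\resc=(1+\lambda c)\chi_c^*f^c$. Combining \cref{prop:residualisotopy} with \cref{lem:rescalediffeo}, applied to $\chi_c$ with $\nu=1+\lambda c$, makes the left two squares commute, exactly as in the proof of \cref{thm:mainC}. This requires $1+\lambda c>0$. That holds on the connected interval $I$ of regular values containing $0$, since $1+\lambda r$ does not vanish there and equals $1$ at $r=0$.

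For the right-hand column, define $\mu_c\colon M_c\to\fg^*$ as the reduced moment map of the $G$-action. By \cref{lem:hamiltonianlift}, the $G$-action commutes with the $S^1$-action and has moment map $(1+\lambda r)\pi^*\mu_B$. This map is $S^1$-invariant, so it descends to $M_c$: we set $\pi_c^*\mu_c = i_c^*\big((1+\lambda r)\pi^*\mu_B\big)=(1+\lambda c)(\pi\circ i_c)^*\mu_B$.

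The top right square then commutes by definition. For the bottom right square, use $\pi\circ i_c=\chi_c\circ\pi_c$ from \cref{prop:DHmodel}. This gives $\pi_c^*\mu_c=\pi_c^*\big((1+\lambda c)\chi_c^*\mu_B\big)$. Since $\pi_c$ is a surjective submersion, $\pi_c^*$ is injective, so $\mu_c=(1+\lambda c)\,\mu_B\circ\chi_c$. Equivalently, $\frac{1}{1+\lambda c}\mu_c=\mu_B\circ\chi_c$, which is the commutativity of the bottom right square.

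One could add the check that $\mu_c$ really is a moment map for the residual $G$-action on $(M_c,\omega_c)$. This follows from the rescaling relation: $\chi_c$ is $G$-equivariant, because the $G$-generator $\tilde X$ projects to $X^\sharp$ under $\pi$, and $\omega_c=(1+\lambda c)\chi_c^*\omega_B$. The diagram itself, however, only uses the formula for $\mu_c$.

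I expect no genuine obstacle. The one point needing care is the sign condition $1+\lambda c>0$, which is needed to apply \cref{lem:rescalediffeo} with $\nu>0$. It is handled by the connectedness of $I$ as explained above.
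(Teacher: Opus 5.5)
Your proposal is correct and matches the paper's proof, which simply states that the spread construction of \cref{thm:mainC} gives the left ladder and that the right ladder commutes by construction of the lifted action; your descent of $(1+\lambda r)\pi^*\mu_B$ to $\mu_c$, combined with $\pi\circ i_c=\chi_c\circ\pi_c$, is exactly that argument written out. Your worry about the sign of $1+\lambda c$ is harmless but unnecessary: the proof of \cref{lem:rescalediffeo} goes through verbatim for any $\nu\neq 0$, so you do not need to assume $0\in I$.
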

\begin{proof}
    The usual spread construction gives the left ladder. The right ladder is commutative by construction of the lifted action.
\end{proof}
\begin{remark}
    This construction is compatible with the cut construction introduced above as long as we cut by the Hamiltonian $r$, which is the second component of $\mu$. Colloquially speaking, this means that we can replace $I$ with any closed or halfclosed interval $I'\subset I$, corresponding to two resp.\ one symplectic cut by the principal $S^1$-action performed on $M$ (see again \cite{lerman1995symplectic}). 
\end{remark}

\begin{example}
    If $\lambda=0,$ $\pi:Z\to B$ is the trivial bundle, there are no restrictions on $I$ and the image of the moment map is 
    \begin{equation*}
        \mu(M) = \bigsqcup_{r\in I}\left(\mu_B(B) \times \{r\}\right) = \mu_B(B) \times I.
    \end{equation*}
    For $I=\IR$, we get $M \cong B\times T^*S^1$, for $I=[0,\infty)$ (i.e. cutting $B\times T^*S^1$ at $r=0$) we obtain $M\cong B\times \IC$, and for $I=[0,1]$ (i.e. cutting at $r=0$ and $r=1$) we have $M \cong B\times S^2.$
\end{example}

\subsubsection*{Toric Actions}
If, in the setting described above, $(B,\omega_B)$ is compact and toric, $[\omega_B]$ is integral exactly when $\mu_B$ maps all fixed points to the weight lattice (see \cite[Exercises 3.6 and 3.7]{guillemin2012moment}). Given the scaling condition \eqref{eq:scaling2}, a toric action on a compact manifold thus always lifts to $Z$ and the lifted action is toric: 

\begin{corollary}
    In the situation described above, with $(B,\omega_B)$  a compact symplectic toric manifold, the Duistermaat--Heckman model $(M,\omega)$ is a toric manifold whose $G\times S^1$-action is generated by the moment map $$\mu=((1+\lambda r)\pi^*\mu_B,r):M \to \fg^*\oplus \IR.$$
\end{corollary}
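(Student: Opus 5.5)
The plan is to combine \cref{lem:hamiltonianlift} with the two facts about compact toric manifolds recalled just before the statement. I would organize the argument into three steps: lifting the action, computing dimensions, and checking effectiveness.

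\textbf{Step 1: the action lifts.} First I will verify that the hypotheses of \cref{lem:hamiltonianlift} hold, i.e.\ that the $G$-action on $B$ lifts to the prequantum bundle $Z$. By \eqref{eq:scaling2}, the class $[\lambda\omega_B]=c_1(Z)$ is integral. By the cited exercises of \cite{guillemin2012moment}, for a compact toric manifold, integrality of the symplectic class means that, after possibly translating the moment map, all fixed points of $\lambda\mu_B$ land in $\fgz^*$. Since $B$ is compact and toric, it has at least one fixed point. The lifting criterion of \cite[Example 6.10]{guillemin2002moment} then applies, and \cref{lem:hamiltonianlift} gives the Hamiltonian $G\times S^1$-action with moment map $\mu=((1+\lambda r)\pi^*\mu_B,r)$.

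One subtlety deserves care: $\mu_B$ is only determined up to a constant, so the statement should be read with $\mu_B$ normalized appropriately. I would remark that replacing $\mu_B$ by $\mu_B+\xi$ changes the lifted action only by a reparametrization of the torus $G\times S^1$. This is the point I expect to need the most care.

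\textbf{Step 2: dimension count.} Let $\dim B=2m$ with $\dim G=m$. Then $\dim M=\dim Z+1=2m+2$, while the torus $G\times S^1$ has dimension $m+1=\tfrac12\dim M$. So the dimension is exactly right for a toric action.

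\textbf{Step 3: effectiveness.} It remains to show that $G\times S^1$ acts effectively. Suppose $(g,s)$ acts trivially on $M$.

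Projecting to $B$: the lifted $G$-action covers the action on $B$, and the $S^1$-action preserves the fibres of $\pi$. Hence $g$ acts trivially on $B$, and since the action on $B$ is effective, $g=e$.

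Then $s$ acts trivially, and since the principal $S^1$-action on $Z$ is free, $s=1$.

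Combining the three steps, $M$ is toric with the stated moment map. Compactness of $M$ is not claimed (it is only toric in the sense of an effective half-dimensional Hamiltonian torus action), so I would not need to say anything about compactness.
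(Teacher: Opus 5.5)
Your proposal is correct and follows the paper's route: integrality of $[\lambda\omega_B]$ puts the fixed-point values of a suitably translated $\lambda\mu_B$ into $\fgz^*$, so the action lifts to $Z$, and \cref{lem:hamiltonianlift} then gives the $G\times S^1$-action with the stated moment map. The paper only asserts that the lifted action is toric, so your dimension count and effectiveness argument (project to $B$, then use freeness of the principal $S^1$-action) fill in details the paper leaves implicit.
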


After compactifying with two cuts as described in \cref{ssec:cutspread}, we obtain a compact symplectic toric manifold, which is thus entirely characterised by the image of its moment map:
\begin{align*}
    \mu(M) &= \left\{ (\varphi,r)\in \fg^*\oplus \IR : r\in I,\; \varphi\in (1+\lambda r)\mu_B(B)\right\} \\
    &= \bigsqcup_{r\in I}\left((1+\lambda r)\mu_B(B) \times \{r\}\right),
\end{align*}
where the second line exhibits $\mu(M)$ as a family of (linearly) scaled copies of $\mu_B(B)$ parametrised by $r$. \smallskip 

Let us now turn to the key example of this section, which will be used later on.

\begin{example} \label{ex:Hirzebruchs}
    Consider $B=S^2$ equipped with the usual symplectic form and moment map given by the height function. If we normalise such that $\mu_B(B) = [0,1]$, $[\omega_B]$ is a generator of $H^2(B;\IZ)\hookrightarrow H^2_\dR(B,\IR).$ The scaling condition \eqref{eq:scaling} thus imposes $\lambda\in \IZ.$ We set $\lambda =-n$ with $n\in \mathbb{N}$ and take $I=[-1,0]$. The moment polytope is then
    \begin{align*}
        \mu(M) &= \bigsqcup_{r\in [-1,0]}(1-nr)[0,1] \times \{r\} \\
        &= \textup{Conv}\{(0,0), (1,0),(0,-1),(1+n,-1)\}
    \end{align*}
    (see \cref{fig:Hirzebruchs}) and we deduce that $M$ is isomorphic to a Hirzebruch-surface $\FF_n.$
    \begin{figure}[h]
        \centering
        \begin{tikzpicture}[scale = 0.3]
            \draw [fill=lightgray, fill opacity= 0.1, line width = 1pt] (-18,0) -- (-18,4) -- (-14,4) -- (-14,0) -- cycle;
            \node at (-16,1) {$n=0$};
            \draw [fill=lightgray, fill opacity = 0.1, line width = 1pt] (-11,0) -- (-11,4) -- (-7,4) -- (-3,0) -- cycle;
            \node at (-8,1) {$n=1$};
            \draw [fill=lightgray, fill opacity = 0.1, line width = 1pt] (0,0) -- (0,4) -- (4,4) -- (12,0) -- cycle;
            \node at (4,1) {$n=2$};
            \draw[-stealth] (17,0) -- node[right]{$r$} (17,4);
        \end{tikzpicture}
        \caption{$\mu(M)$ of \cref{ex:Hirzebruchs} for $n=0,1,2.$}
        \label{fig:Hirzebruchs}
    \end{figure}
\end{example}

\begin{remark}
    More generally, one can show that if $\mu_B$ is proper, the induced moment map $\mu$ on the Duistermaat--Heckman model is proper as a map to the convex open set $\fg^*\oplus I.$ In view of \cite[Proposition 6.5]{karshon2015non}, we can thus also in this situation identify the resulting toric manifold from the image of its moment map.
\end{remark}

\section{Application to Hirzebruch Surfaces}
\subsection{Setup}
Let $\FF_n$ be the Hirzebruch surfaces constructed in \cref{ex:Hirzebruchs}. We will write $s$ for the horizontal coordinate of the moment map $\mu$, so that $\mu=(s,r)$. Recall in particular that the moment map $\mu$ generates a toric action by $T^2$ and we make the identification $T^2 = S^1 \times S^1$ where $S^1 \times \{1\}:= S$ acts by the flow of $s$ and $\{1\} \times S^1:=R$ acts by the flow of $r$. We will now describe the Lagrangians before the spread, outlining how they can be constructed and what their symmetries are.

Recall that in \cref{ex:Hirzebruchs} we constructed the Hirzebruch surfaces by applying symplectic cuts to the Duistermaat--Heckman model. Before the cuts, the image under the moment map of the Duistermaat--Heckman model looks as sketched in \cref{fig:beforecuts}. We identify the preimage of the interior with $T^2 \times \{\text{interior}\}\subset T^2\times \IR^2$ and the moment map with the projection to the second factor. For any closed subgroup $H\subset T^2$, the conormal bundle $H\times \mathfrak{h}^\perp \subset T^2\times \IR^2$ is a Lagrangian submanifold. Note that this makes sense since we identify $\ft \cong \IR^2 \cong \ft^*$. The Lagrangian condition follows since under this identification we have $\ft^0 = \ft^\perp$. 

Consider then $$H_\text{odd}=S\times \IZ_2\qq{and}H_\text{even}=\{(e^{2i\theta},e^{i\theta}):\theta\in [0,2\pi)\}.$$ This gives Lagrangians over the interior which fiber over line segments as depicted in \cref{fig:beforecuts}. 

\begin{enumerate}
    \item For $H_\text{odd}$, one can think of two cylinders above the line.  It is easy to see that when the cuts are performed, the ends of the two cylinders are glued onto each other and one obtains a visible Lagrangian torus in the Hirzebruch surface. 
    \item For $H_\text{even}$, there is just one cylinder above the line. Each of the symplectic cuts corresponds to adding a cross cap which compactifies the cylinder at one of its ends. Hence we get a visible Klein bottle after the two cuts (see \cite[Exercise 5.16]{evans2023lectures}). 
\end{enumerate}

\begin{remark}
    There is another way to view the torus in (1). By performing symplectic reduction on the segment defined by $\mathfrak{h}_\text{odd}^\perp$ in the Hirzebruch surface (i.e.\ after the symplectic cuts), we obtain a symplectic sphere. This is an example of a so-called \emph{symmetric probe}, see \cite{McD11,AbrBorMcD14}. The projection of the visible torus in the quotient sphere is a meridian. In particular, deforming the meridian to the equator by a Hamiltonian isotopy proves that the torus is Hamiltonian isotopic to a toric fibre in the Hirzebruch surface in question.
\end{remark}

\begin{figure}
    \centering
    \begin{tikzpicture}[>=stealth, scale =0.5]

% Square centered at (-4,0)
\fill[lightgray, opacity=0.1] (-6,-2) rectangle (-2,2);

\draw (-6,-2) -- (-6,2);
\draw (-2,-2) -- (-2,2);

\draw[dotted] (-6,2) -- (-6,3);
\draw[dotted] (-6,-2) -- (-6,-3);

\draw[dotted] (-2,2) -- (-2,3);
\draw[dotted] (-2,-2) -- (-2,-3);

\draw[PineGreen] (-3,-2) -- (-5,2) node[midway, sloped, above]{\scriptsize $\mathfrak{h}_\text{even}^\perp$};
\draw[dotted,PineGreen] (-5,2) -- (-5.5,3);
\draw[dotted,PineGreen] (-3,-2) -- (-2.5,-3);

% Trapezoid with bottom-left at (2,-2)
\fill[lightgray, opacity=0.1] (2,-2) -- (2,2) -- (6,2) -- (10,-2) -- cycle;

% vertical edge
\draw[dotted] (2,-3) -- (2,-2);
\draw (2,-2) -- (2,2);
\draw[dotted] (2,2) -- (2,3);

% sloped edge
\draw[dotted] (5,3) -- (6,2);
\draw (6,2) -- (10,-2);
\draw[dotted] (10,-2) -- (11,-3);

\draw[PineGreen] (4,-2) -- (4,2) node[midway, sloped, above]{\scriptsize $\mathfrak{h}_\text{odd}^\perp$};
\draw[dotted,PineGreen] (4,-3)--(4,-2);
\draw[dotted,PineGreen] (4,2)--(4,3);

\end{tikzpicture}
    \caption{Image of the moment map before symplectic cuts.}
    \label{fig:beforecuts}
\end{figure}

Let $L$ be one of the two Lagrangians constructed above. Visibility of $L$ implies that $r\vert_L:L\to \IR^2$ is transverse to $\{r=c\}$ and hence $r\vert_L^{-1}(c)=L\cap r^{-1}(c)$ is a smooth submanifold. The intersection inherits a free action by
\begin{equation*}
    H_\text{odd} \cap R = \IZ_2 = H_\text{even}\cap R
\end{equation*}
and the quotient $L_c:=(L\cap r^{-1}(c))/\IZ_2$ embeds as a Lagrangian into $M_c$. After rescaling i.e.\ setting $\tilde{L}_c:= \chi_c(L_c)$, this defines a smooth family of Lagrangians in $B=S^2$. For later convenience, we summarise this with the following commutative diagram:
\begin{equation}\label{eq:smoothfamilyofLagrangians}
    \begin{tikzcd}
L\cap r^{-1}(c) \arrow[d, "\pi_{L_c}"',"/\IZ_2", two heads] \arrow[r, hook] & r^{-1}(c) \arrow[d, "\pi_c"',"/S^ 1", two heads]  \\
L_c \arrow[r, hook]   \arrow[d,"\chi_c\vert_{L_c}"]                                                & {(M_c,\omega_c)} \arrow[d,"\chi_c" ] \\
\tilde{L}_c \arrow[r,hook] & (B,\omega_B).
\end{tikzcd}
\end{equation}

\subsection{Proof of \cref{thm:mainA}}
\begin{proof}[Proof of \cref{thm:mainA}]
    Let $\{\varphi^c:B \to B\}_{c\in [0,1]}$ be a smooth path of Hamiltonian diffeomorphisms of the standard sphere such that $\varphi^c(\tilde{L}_c)\subset B$ is a Lagrangian covering the moment polytope $\mu_B(B)$ for every $c \in [0,1]$ (see also \cref{fig:Family}). Let $F:\FF_n\times [0,1] \rightarrow \R$ be its spread to $\FF_n$ and $\psi^F_1$ the Hamiltonian diffeomorphism generated by $F$. By the properties of the spread construction, we get
    \begin{enumerate}
        \item $r \circ \psi^F_1 =r$ and
        \item the commutative diagram for all $c \in [0,1]$
        \begin{equation} \label{eq:centralladder}
            \begin{tikzcd}
        r^{-1}(c) \arrow[r, "\psi^F_1"] \arrow[d, "\pi_c", two heads]      & r^{-1}(c)  \arrow[d, "\pi_c", two heads]            \\
        {(M_c,\omega_c)} \arrow[r, "\psi^{F_\resc}_1"] \arrow[d, "\chi_c",two heads] & {(M_c,\omega_c)} \arrow[d, "\chi_c", two heads]  \\
        {(B,\omega_B)} \arrow[r, "\varphi^c"]                              & {(B,\omega_B)}.                                                        
    \end{tikzcd}
        \end{equation}
         We recall that $(M_0,\omega_0)$ and $(M_1,\omega_1)$ are not reduced spaces, but rather the cut locus of the two symplectic cuts, as in Section \ref{ssec:cutspread}. More specifically, see \cref{cor:spread_cut}.
    \end{enumerate}

For every $c \in [0,1]$, let us analyse what the image of $L \cap r^{-1}(c)$ under $s$ looks like after the spread. The map $r$ is preserved under the isotopy of the spread. To prove surjectivity of the restricted moment map $\mu\vert_{\psi_1^F(L)}$ it is thus sufficient to show that $s(\psi^F_1(L \cap r^{-1}(c)))$ is equal to the image of the full level set $r^{-1}(c)$. 

First, note that the commutative diagram \eqref{eq:centralladder} can be completed on the left with diagram \eqref{eq:smoothfamilyofLagrangians}, summarising the construction of the smooth family $\{\tilde{L}_c\}_c$. Second, the right side can be complemented with the diagrams describing the behaviour of the moment map under reduction and rescaling respectively, giving
\begin{equation*}
    \begin{tikzcd}
        L\cap r^{-1}(c) \arrow[r, hook] \arrow[d, "\pi_{L_c}", two heads] & r^{-1}(c) \arrow[r, "\psi^F_1"] \arrow[d, "\pi_c", two heads]      & r^{-1}(c) \arrow[r, hook] \arrow[d, "\pi_c", two heads] & {(M,\omega)} \arrow[d, "s"]            \\
        L_c \arrow[r, hook] \arrow[d, "\chi_c\vert_{L_c}",two heads]                & {(M_c,\omega_c)} \arrow[r, "\psi^{F_\resc}_1"] \arrow[d, "\chi_c",two heads] & {(M_c,\omega_c)} \arrow[r, "\mu_c"] \arrow[d, "\chi_c", two heads] & \IR \arrow[d, "\frac{1}{1+\lambda c}"] \\
        {\tilde{L}_c} \arrow[r, hook]                                         & {(B,\omega_B)} \arrow[r, "\varphi^c"]                              & {(B,\omega_B)} \arrow[r, "\mu_B"]                       & \IR.                                   
    \end{tikzcd}
    \end{equation*}

Chasing the diagram, we see that
\begin{align*}
    \frac{1}{1+\lambda c} s\left( \psi_1^F(L\cap r^{-1}(c))\right) &= \mu_B(\varphi^c(\tilde{L}_c)) \\
    &= \mu_B(B) \\
    &= \mu_B(\chi_c(\pi_c(r^{-1}(c)))) \\
    &= \frac{1}{1+\lambda c}s(r^{-1}(c)),
\end{align*}
and conclude that
\begin{equation}
    \label{eq:s_restriction}
    s(\psi^F_1(L\cap r^{-1}(c))) = s(r^{-1}(c)).
\end{equation}
We stress again that this holds for all $c \in [0,1]$, boundary points included.

Let us now prove that $\mu\vert_{\psi_1^F(L)} = (s\vert_{\psi_1^F(L)},r\vert_{\psi_1^F(L)})$ is a submersion. To that end, note that differentiating $r \circ \psi_F^1 = r$ and \eqref{eq:s_restriction} yields
\begin{align*}
    dr\vert_{\psi_1^F(L)} &= dr ;\\
    ds\vert_{\psi_1^F(L) \cap r^{-1}(c)} &= ds\vert_{r^{-1}(c)}.
\end{align*}
The RHS terms are obviously linearly independent, proving the claim.
\end{proof}
\begin{remark}
    Since the spread construction preserves the $R$-action, the final Lagrangian $\psi_ 1^F(L)$ is invariant under $H\cap R\cong \IZ_2.$
\end{remark}

\begin{remark}
    Note that the above proof actually shows that also the edges are covered by $\psi^F_1(L)$. More precisely, the differential of $s\vert_{\psi^F_1(L)}$ does not vanish above the interior of the horizontal edges. Similarly, the differential of $r\vert_{\psi^F_1(L)}$ does not vanish over the remaining two edges.
\end{remark}

\bibliographystyle{plain}
\bibliography{references}

\end{document}